\newif\ifdraftversion
\theoremstyle{plain}
    \newtheorem{theorem}{Theorem}[section]
    \newtheorem{lemma}[theorem]{Lemma}
    \newtheorem{assumption}[theorem]{Assumption}
\theoremstyle{definition}
    \newtheorem{remark}[theorem]{Remark}
\theoremstyle{remark}
\definecolor{miverde}{RGB}{128,255,0}
\definecolor{minaranja}{RGB}{255,128,1}
\DeclareMathOperator{\supp}{supp}
\newcommand{\QQ}{\textsf{Q}}
\newcommand\BB{\mathbb B}
\newcommand{\B}{{\mathcal B}}
\newcommand\M{\mathcal M}
\newcommand\T{\mathcal T}
\newcommand\VV{\mathbb V}
\newcommand{\I}{\mathcal I}
\newcommand{\bF}{{\bf F}}
\def\vv{\mathbf{v}}
\newcommand{\disc}{\textrm{disc}}
\newcommand{\uu}{\tilde{u}}
\definecolor{dgreen}{rgb}{0.5,0.2,1}
\definecolor{gold}{rgb}{.7,.5,0}
\definecolor{dred}{rgb}{0.92,0,0}
\definecolor{dgreen}{rgb}{0,0.6,0}
\let\hat\widehat
\let\tilde\widetilde
\newcommand{\norm}[1]{{\left\vert\kern-0.25ex\left\vert #1 
    \right\vert\kern-0.25ex\right\vert}}
\newcommand{\convergenceslope}[6] 
{
 \draw [color=black,mark=none,#6]
 (axis cs: #1, #3)  -- node [midway, #5] {$h^{#4}$} (axis cs: #2, \fpeval{exp((#4) * ln((#2)/(#1)) + ln(#3))});
}
\def\M3AS{Math.\ Models\ Methods\ Appl.\ Sci.}
\newcommand\ifdraftversion\tikzsetnextfilename{3}\input{[/3}\else\includegraphics{]/3}\fi{\ifdraftversion\tikzsetnextfilename{#2}\input{#1/#2}\else\includegraphics{#3/#2}\fi}
\newcommand{\review}[1]{{\color{black}#1\color{black}}}
\begin{document}

\title{Isogeometric Analysis on V-reps: first results}
\author[1]{Pablo Antolin}
\author[1,2]{Annalisa Buffa}
\author[2]{Massimiliano Martinelli}
\affil[1]{\small \'Ecole Polytechnique F\'ed\'erale de Lausanne, Institute of Mathematics, Lausanne, Switzerland.}
\affil[2]{\small Istituto di Matematica Applicata e Tecnologie Informatiche 
`E. Magenes' (CNR), Pavia, Italy.}

\maketitle

\begin{abstract} 
Inspired by the introduction of Volumetric Modeling via volumetric representations (V-reps) by Massarwi and Elber in 2016, in this paper we present a novel approach for the construction of isogeometric numerical methods for elliptic PDEs on trimmed geometries,  seen as a special class of more general V-reps.  We develop tools for approximation and local re-parameterization of trimmed elements for three dimensional problems,  and we provide a theoretical framework that fully justifies our algorithmic choices. We validate our approach both on two and three dimensional problems, for diffusion and linear elasticity.
\end{abstract}

\begin{quote}\small
\textbf{Keywords:} numerical methods for PDEs, isogeometric methods, trimmed geometries, \review{V-reps}. 
\end{quote}

\section{Introduction}
Back in 2005, T.\ Hughes and co-authors introduced isogeometric analysis (IGA) \cite{hughes2005} with the promise to alleviate the burden of conversion between computer-aided design (CAD) geometries and finite element (FE) computational domains. IGA has been one of the most successful ideas in the last decades in computational mechanics, and there are really a huge amount of contributions to the field, also because splines have been  proved  to be a very powerful tool for the approximation of solutions of partial differential equations (PDEs), \cite{beirao2014}. Indeed, IGA has been a tremendous success since 2005 with a wide range of applications (see for instance the special issue \cite{CMAME-IGA2017}), and it is becoming a mature method: its mathematical analysis is now well understood \cite{beirao2014}, fast assembly and solvers exist today \cite{MR3761002,MR3811621} and strategies for adaptive refinement with a sound mathematical theory are now available, see \cite{1078-0947_2019_1_241} and the references therein. 

Nevertheless, the construction of  geometric representations suitable to IGA, from CAD geometries,  remains the major unsolved issue and IGA  is indeed a  very efficient strategy on ``simple'' geometries (i.e., on 2D or $2.5$D geometries \cite{Bletz15,Bletzinger2017,Guo18}) 
but not for general three dimensional problems.  For 3D objects the conversion of a CAD geometry into a geometry on which  isogeometric methods are defined (IGA-ready) is \review{challenging and has many similarities with  the conversion to a FE domain: } its automation is far from being at the state of the art.  The main reason why this problem is still open is that CAD geometric descriptions are based two main ingredients:  \emph{i)}  they represent only the boundary of geometries and not their interior, i.e., B-reps \cite{braid_designing_1974,requicha_solid_1992} ; \emph{ii)} they allow for  boolean operations among spline surfaces, primitives and so on. The result is a complex boundary representation that by no means corresponds to a valid mesh. For two dimensional problems, this issue was successfully addressed for the first time in \cite{benson15}, and then by many other authors.
Interest readers may refer to the review \cite{Marussig2018} and the many references therein. 

As a matter of fact, this has limited the use of isogeometric techniques for complex three dimensional domains. The only viable approach has been, until now, the use of spline basis on unstructured hexahedral (or tetrahedral) meshes, which has been object of several contributions \review{\cite{li_spline-based_2012,wang2012,Wei17,xia_isogeometric_2017}}.
In this context, the presence of extraordinary points and edges make the construction of regular B-spline functions preserving accuracy extremely challenging (see \cite{Wei18,kapl_isogeometric_2018} and the references therein).

On the other hand, a  new paradigm has recently been introduced by Elber et al.\ in \cite{Massarwi201636}, according to which geometries are described as V-reps, i.e., via  volumetric representations. V-reps are defined as a generalization of B-reps as boundaries of V-reps are always B-reps. Geometries are described thorough the geometric representation of the volume they occupy and the basic building blocks for V-reps are trivariate (as opposed to bivariate) representations. Boolean operations are handled at the level of volumes instead of surfaces. Loosely speaking, V-reps are a class of volumes in which volumetric boolean operations are allowed and produce V-reps.

As a matter of fact, V-reps are not IGA-ready geometries as the handling of boolean operations at the level of simulation of PDEs  is not at the state of the art, and is very much linked with the numerical treatment of trimming. 
In this paper, we initiate the construction of accurate isogeometric  methods on V-reps. 
In particular, we focus on the design of a numerical methodology for the simulation of elliptic problems on computational domains described in terms of V-reps, and the main rule we adopt in our design is the following:  we wish to construct numerical methods that uses the V-rep representation without 
 asking for meshing or global re-parameterization, while we will make use of local high-order re-parameterizations. 

Our work is related to previous efforts in the construction of numerical methods on trimmed domains and of immersed finite elements approaches. 
In particular, we should mention the Finite Cell approach, which was successfully applied in the isogeometric framework in \cite{schillinger2012,Rank2013,kudela_efficient_2015,Rank2017} and the references therein;
\review{the CutFEM and CutIGA methods \cite{burman_cutfem:_2015,Elfverson2018}; as well as other high-order unfitted Galerkin discretizations, e.g.\ \cite{lehrenfeld_analysis_2018}}.
Subdivision techniques are used in \cite{cirak1} and boundary corrections are proposed in \cite{cirak2}. 

\review{These families of immersed methods present numerous similiarities with the approach presented in this and previous works (see, e.g.\ \cite{benson15,Marussig2018}, for the case of 2D geometries).
Indeed, those unfitted methods define the solution discretization by means of a background mesh that is completely disconnected from geometry boundary; while, following an isoparametric paradigm, the approach that we present here considers the underlying V-rep's spline space as a basis for the solution discretization. 
Nonetheless, the results discussed in this work, namely, the presented error estimate and the construction of high-order local re-parameterizations,  can be used directly also in the case of finite element and isogeometric immersed methods.}

We say that a volume $\Omega$ is a  \emph{non-conforming multipatch trivariate volume} (nCMTV), if it is constructed as a collection of trivariates (patches), $T_1, \ldots T_k$ such that: 
\begin{itemize}
\item For each $T_\ell$, there is  a  spline diffeomorphism $\bF_\ell: \hat T \to T_\ell $ , $\ell=1,\ldots, k$ ($\hat T$ stands for the parameter space); 
 \item $\bar{T}_i\cap \bar{T}_j$ is either empty or is the image of a full face $\hat{f}_i$ of $\hat T$ for $T_i$ and a full face  $\hat{f}_j$ of $\hat T$ for $T_j$;
\item the parameterizations $F_i \vert_{\hat{f}_i} $ and $F_j\vert_{\hat{f}_j}$ may differ.
\end{itemize}

If the parameterizations $F_i \vert_{\hat{f}_i} $ and $F_j\vert_{\hat{f}_j}$ are identical, then we say that $\Omega$ is a conforming multipatch trivariate volume (CMTV).  
 Following  \cite{Massarwi201636}, volumetric representations  are obtained as the closure of the  nCMTV volumes  with respect to the boolean operations of union, intersection and their combinations. 

In this first contribution we consider only domains generated by subtraction (or intersection).
We assume that we are given a collection of nCMTVs  $\Omega_0, \Omega_1 ... \Omega_N$, and  we consider  the computational domain $\Omega$  defined as follows: 
\begin{equation}
  \label{eq:Omega}
   \Omega = \Omega_0 \setminus \bigcup_{i=1}^N \bar{\Omega}_i\ .
\end{equation}
Finally, for the sake of simplicity we will assume that $\Omega_0$ has indeed conforming meshes.
If this is not the  case, non conforming multipatch interfaces should be handled via mortaring techniques \cite{Brivadis15}.  
The aim of this paper is to propose algorithms to tackle the isogeometric simulation of linear elliptic problems on such a computational domain, without any re-parameterization and within the isoparametric paradigm.
As we concentrate of Neumann problems only, after reminding how the main approximation properties of splines ensure approximability of solutions on $\Omega$, we concentrate on the main issues: the definition of an integration formula on trimmed elements, and the study of the conditioning and pre-conditioning of the underlying linear system.
We discuss the approximation of the trimmed boundaries and provide approximation estimates which are validated against numerical benchmarks. 

Our contribution is organized as follows: in Section \ref{sec:theory} we defined our model problem and corresponding iso\-geo\-me\-tric discretization, together with our assembly algorithms and a brief discussion about the linear system conditioning.
In Section \ref{sec:experiments} we will provide numerical testing, we discuss both academic tests for validation of our algorithms, and also one rather complex application in linear elasticity.
We draw our conclusions in Section \ref{sec:conclusions}.

\section{V-reps as computational domain for PDEs}
\label{sec:theory}
Given the domain $\Omega$ defined in \eqref{eq:Omega}, we think of it as the domain of definition of elliptic partial differential equations. Indeed, we consider two model problems: the Laplace problem as model of  diffusion phenomena and compressible linear elasticity as a simplified model of the deformation of solids. 
 
The boundary of the domain  $\Omega$ defined  in \eqref{eq:Omega}  naturally splits in two parts: a non-trimmed part of boundary $\partial \Omega\cap  \partial \Omega_0$, 
and a trimmed boundary  $\partial \Omega \setminus \partial \Omega_0$.  

We denote by $\Gamma_D$ a connected subset (with Lipschitz boundary) of $\partial \Omega$.  For the time being, we assume that $\Gamma_D \subset \partial \Omega_0 \cap\partial \Omega$: this means that we allow for essential boundary conditions only on the non-trimmed part of the boundary. For the sake of simplicity, we assume that $\Gamma_D \neq \emptyset$. If this is not the case, all what follows remain valid, once the definition of spaces are adapted and suitable compatibility conditions of data are satisfied. 
 As usual, we set $\Gamma_N = \partial \Omega \setminus \bar{\Gamma}_D$. 

We define: 
\[ V=\{ u\in H^1(\Omega)^k \ :\  u\vert_{\Gamma_D}  =0 \}\,, \] 
where $k$ is either $1$ for diffusion problems or $3$ for elasticity problems. For both problems, we assume to have a bilinear form $a(\cdot,\cdot)$ acting on $V\times V $ and verifying:
\begin{itemize}
\item Continuity: there exists a $M>0$ such that $ a(u,v) \leq M \|u\|_V \| v\|_V$;
\item Coercivity: there exists a $\alpha>0$ such that $ a(u,u) \geq  \alpha \|u\|^2_V $. 
\end{itemize}

The continuous problem to solve is formulated as follows and admits a unique solution:
\begin{equation}
  \label{eq:varform}
 \text{Find }u\in V\  : \    a(u,v) = \int_\Omega f\,v + \int_{\Gamma_N}  g \, v  \qquad \forall v\in V.
\end{equation}
If needed, we will denote by $A$ the differential operator in strong form. I.e., $\langle Au,v\rangle_{V'\times V} = a(u,v)$. 
\begin{remark}
Essential boundary conditions can be imposed only on the non-trimmed part of boundary because in the discrete setting described below, the imposition of essential boundary condition on the trimmed part of the boundary ask for special care \cite{burman2012,burman2014,buffa_minimal_2019}. The design of robust techniques to deal with this issue in the framework of trimmed isogeometric analysis is beyond the scope of the present paper. Contributions in this direction can be found in \cite{hansbo,burman1,Marussig17}.
\end{remark}

\subsection{The isogeometric method on V-reps}
\label{sec:iga-on-Vreps}
Thanks to the construction above, the master volume $\Omega_0$ is described by $n_0$ trivariates $T_{0,1}, \ldots T_{0, n_0}$ and $n_0$ parameterizations $\bF_{0,1}, \ldots, \bF_{0,n_0}$. These parameterizations are  constructed on open knots vectors that are defined in the parameter trivariate space $\hat{T}$. We denote by $h_j$ the diameter  of the largest knot span in $T_{0,j}$. 

On each patch composing $\Omega_0$, we have a collection of  B-splines defined on such open knot vectors:
\begin{equation}
  \label{eq:spline-sp-ptaches}
  \BB_{0,j} := \{ B_{0,j,k} = \hat{B}_{0,j,k}\circ  \bF^{-1}_{0,j},   \quad  k\in \I_{0,j} \}  \;; 
\end{equation}
where $ \I_{0,j} $ denotes the running indices on the B-spline functions on the patch $T_{0,j}$. 

 We denote by $\BB^{\disc}(\Omega_0) := \bigcup_{j=0}^{n_0}  \BB_{0,j}  $ and by $\VV_h^\disc(\Omega_0) = \mathrm{span} \{ \BB^{\disc}(\Omega_0)  \}$, where we have set $h=\max_j h_j$. 
From now on, and for the sake of simplicity, we assume that $\Omega_0$ is a CMTV. 
We also construct the space $\vv_h(\Omega) = \VV_h^\disc(\Omega_0)  \cap H^1(\Omega_0)$. It is well known that a basis for this space can be constructed starting from $\BB^{\disc}(\Omega_0) $ and identifying the indices of functions corresponding to coincident control points at the interfaces. In what follows, we suppose to have a set of indices $\I_0:=\{ 0,1,\ldots, I_0 \}$ and a collection of B-splines functions $\BB(\Omega_0) $, indexed by $\ell\in \I_0$, that span $\VV_h(\Omega)$. The generic basis function is denoted as $\B_{0,\ell} \,, \; \ell \in \I_0$. Finally we denote by $\VV_h(\Omega_0) $ the space spanned by these functions. 

Mapping knot surfaces through the mappings $\bF_{0,j}$, we obtain a mesh, $\T_h({\Omega_0})$ which is, indeed, a curvilinear hexahedral partition of   $\Omega_0$. As $\Omega_0 $ is a CMTV, this partition is a conforming decomposition of the domain $\Omega_0$. 

\medskip

Refinement on such a mesh can be performed by knot insertion, and for the sake of our subsequent analysis, we suppose we have a family of refined meshes (obtained by dyadic refinement of all non empty knot spans)  and corresponding refined basis functions. Such a refinement will not change the geometry of the volume $\Omega_0$ but only its representation. In this way, we will assume to have a family of meshes  $\T_h({\Omega_0})$, of spaces $\VV_h(\Omega_0)$ indexed with $h$.  
The dependence on $h$ will be omitted in what follows when it does not matter. 

As our computational domain is $\Omega$ and not $\Omega_0$, we define $\T_h(\Omega)$ as follows:
\begin{equation}
  \label{eq:TOmega}
  \T_h(\Omega)  : =   \{ Q : \quad  \forall Q\in \T_h({\Omega_0})  \ :\ Q\cap \Omega \neq \emptyset    \}.
\end{equation}
Note that, by an abuse of notation,  we could also write: $  \T_h(\Omega) = \T_h({\Omega_0}) \cap \Omega $. 

For further use, among the elements in  $\T_h(\Omega) $, we distinguish two families: 
\begin{itemize}
\item $Q \in  \T_h(\Omega) $ such that ${Q} \cap \Omega = Q$, and we denote their collection as $\T^{int}_h(\Omega)  $;
\item $Q \in  \T_h(\Omega)  $ such that ${Q} \cap \Omega \neq  Q$, and we denote their collection as $\T^\Gamma_h(\Omega)  $; when $ Q\in \T^\Gamma_h(\Omega)  $, we will also make use of  $\mathsf{Q} = Q \cap \Omega$. 
\end{itemize}

The indices $\ell \in \I_0$, such that $ \supp\{ \B_{0,\ell}\} \cap \Omega \neq \emptyset $, are denoted by $\I_0^{act}$, whereas \review{$\# \I_0^{act}$ stands for its cardinality}.
We have the following: 
\begin{lemma} It holds:
  $$ \dim(\VV_h(\Omega)) = \# \I_0^{act} . $$
\end{lemma}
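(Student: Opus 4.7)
The plan is to exhibit the family $\{\B_{0,\ell}|_\Omega : \ell \in \I_0^{act}\}$ as a basis of $\VV_h(\Omega)$, from which the dimension count is immediate. This reduces to verifying two properties: that these restricted functions span $\VV_h(\Omega)$, and that they are linearly independent as functions on $\Omega$.

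The spanning direction is essentially a direct consequence of the definition of $\I_0^{act}$. Any $v_h \in \VV_h(\Omega)$ is, by definition, the restriction to $\Omega$ of some $\tilde v_h = \sum_{\ell \in \I_0} c_\ell \B_{0,\ell} \in \VV_h(\Omega_0)$. For every $\ell \in \I_0 \setminus \I_0^{act}$ we have $\supp\{\B_{0,\ell}\} \cap \Omega = \emptyset$, so $\B_{0,\ell}|_\Omega \equiv 0$ and those terms drop out of the expansion, leaving a representation of $v_h$ purely in terms of $\{\B_{0,\ell}|_\Omega : \ell \in \I_0^{act}\}$.

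For linear independence, I would proceed element by element. Suppose $\sum_{\ell \in \I_0^{act}} c_\ell \B_{0,\ell}(x) = 0$ for every $x \in \Omega$, and fix an arbitrary $Q \in \T_h(\Omega)$. By definition $Q \cap \Omega \neq \emptyset$, and since $\Omega$ is a trimmed V-rep volume (i.e., the closure of a non-degenerate open set), $Q \cap \Omega$ has non-empty interior. Restricted to $Q$, each $\B_{0,\ell}$ is a polynomial, so the combination $\sum_{\ell \in \I_0^{act}} c_\ell \B_{0,\ell}|_Q$ is a polynomial vanishing on a set with non-empty interior, hence vanishing identically on $Q$. The classical local linear independence of B-splines on a single knot-span cell then forces $c_\ell = 0$ for every $\ell$ such that $Q \subseteq \supp\{\B_{0,\ell}\}$. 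Finally, any $\ell \in \I_0^{act}$ satisfies $\supp\{\B_{0,\ell}\} \cap \Omega \neq \emptyset$, which guarantees the existence of at least one $Q \in \T_h(\Omega)$ contained in $\supp\{\B_{0,\ell}\}$, and therefore yields $c_\ell = 0$ for all $\ell \in \I_0^{act}$.

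The only delicate point is the claim that $Q \cap \Omega$ has non-empty interior whenever $Q \cap \Omega \neq \emptyset$. This is exactly where one uses that $\Omega$, as a V-rep obtained by subtraction from $\Omega_0$, is the closure of an open Lipschitz set and does not degenerate to a lower-dimensional subset of $Q$; under the standing regularity assumptions on the trimmed geometry this is automatic. Once that is in place, the polynomial-vanishing argument combined with local B-spline linear independence concludes the proof without further difficulty.
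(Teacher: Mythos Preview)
Your proof is correct and follows essentially the same approach as the paper's: both rest on the local linear independence of B-splines on each B\'ezier element. The paper's argument is a one-line appeal to this fact, while you have spelled out explicitly the spanning step, the polynomial-extension argument from $Q\cap\Omega$ to $Q$, and the observation that every active index has at least one active element in its support; these are precisely the details one would fill in to make the paper's sentence rigorous.
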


\begin{proof}
This is an immediate consequence of the local linear independence of B-splines: i.e., their restriction  to any element $Q$ is a collection of $(p+1)^3$ linear independent functions representing polynomials of degree $p$ in each coordinate direction.
\end{proof}

Thanks to our assumption on $\Gamma_D$, i.e. $\Gamma_D \subset  \partial \Omega_0 \cap \partial \Omega$,  we can define the space: 
\begin{equation}
  \label{eq:spDC}
  \VV_h({\Omega, \Gamma_D}) =  \{  v\in \VV_h(\Omega)  \ :\ v\,\vert_{\Gamma_D} =0 \};
\end{equation}
For both spaces $\VV_h({\Omega, \Gamma_D}) $ and   $\VV_h(\Omega) $, we also have the following approximation property: 
\begin{lemma}  \label{lem:approx}
 There exist  continuous  interpolation operators $\Pi_h: L^2(\Omega) \to \VV_h(\Omega)$ and $\Pi_{h}^{(D)}:L^2(\Omega) \to \VV_h(\Omega, \Gamma_D)$ such that:
 \begin{align}
   \| v - \Pi_h(v) \|_{H^s(\Omega)} & \leq C h^{t-s} \|   v  \|_{H^t(\Omega)}  \qquad \forall v\in H^{t}(\Omega)\,, \\
 \| v - \Pi_h^{(D)} (v) \|_{H^s(\Omega)} & \leq C h^{t-s} \|   v  \|_{H^t(\Omega)}  \qquad \forall v\in H^{t}(\Omega) \ :\ v_{\vert \Gamma_D} =0\,,
 \end{align}
for all $s=0,1$ and $1  \leq t \leq p+1. $
 
\begin{proof}
Given a $v \in H^{s+1}(\Omega)$, we denote by $\tilde{\cdot}$ the Sobolev extension operator as defined in \cite[theorem 5, page 181]{Stein70} that continuously extends any function defined in $\Omega$ to functions defined in $\mathbb{R}^3$ and verifies: 
\[ \| \tilde{v}  \|_{H^t(\mathbb{R}^3) }  \leq C \| v \|_{H^t(\Omega)}  \qquad t\geq 0\,,\]
where the constant $C$ does not depend on $t$.  
Now, given a $v \in H^{t}(\Omega)$,  the local quasi-interpolant defined in \cite{beirao2014}, or the one defined in \cite{buffa2016b}, can be applied to $\tilde{v}$ and the result follows. A similar idea applies to the case with Dirichlet boundary conditions, thanks to the assumption $\Gamma_D \subset \partial \Omega_0 \cap \partial \Omega$. 
 \end{proof}
\end{lemma}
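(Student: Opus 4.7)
The plan is to reduce the approximation problem on the trimmed domain $\Omega$ to the classical spline approximation problem on the ambient CMTV $\Omega_0$, where optimal-order quasi-interpolation estimates are already established. The first step is a Sobolev extension: given $v \in H^t(\Omega)$, I would invoke a universal extension operator (e.g.\ Stein's extension from \cite{Stein70}) to obtain $\tilde v$ defined on all of $\mathbb{R}^3$ with $\|\tilde v\|_{H^t(\mathbb{R}^3)} \leq C \|v\|_{H^t(\Omega)}$, where $C$ is independent of the regularity index throughout the admissible range $0 \leq t \leq p+1$.

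Once the extension is in hand, I would apply one of the standard local spline quasi-interpolation operators on $\Omega_0$ (such as those constructed in \cite{beirao2014} or \cite{buffa2016b}) to $\tilde v$, yielding $\tilde v_h \in \VV_h(\Omega_0)$ with the usual local error estimate $\|\tilde v - \tilde v_h\|_{H^s(\Omega_0)} \leq C h^{t-s} \|\tilde v\|_{H^t(\Omega_0)}$. Setting $\Pi_h(v) := \tilde v_h$, seen as an element of $\VV_h(\Omega)$ by restriction, and using the monotonicity of Sobolev norms under restriction to a subdomain together with the extension bound closes the argument for the first estimate.

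For the Dirichlet operator $\Pi_h^{(D)}$, the structural assumption $\Gamma_D \subset \partial \Omega_0 \cap \partial \Omega$ is crucial: it keeps $\Gamma_D$ on the conforming part of $\partial \Omega_0$, aligned with knot surfaces of the mesh on $\Omega_0$. I would use the variant of the quasi-interpolant that zeroes out the degrees of freedom whose supporting B-splines do not vanish on $\Gamma_D$; such boundary-compatible variants exist in the cited references with identical approximation orders. To preserve the vanishing trace during the extension step, I would extend $v$ by zero across $\Gamma_D$ before applying Stein on the remaining portion of $\partial \Omega$, so that the extended function still vanishes on $\Gamma_D$.

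The main obstacle is really the Dirichlet case: ensuring that the extension and the boundary-preserving quasi-interpolant are mutually compatible without losing an order in $h$. Once one observes that $\Gamma_D$ coincides with a union of mesh faces of $\T_h(\Omega_0)$, this compatibility becomes a routine consequence of the standard construction of boundary-preserving quasi-interpolants, and the whole argument is essentially a composition of off-the-shelf tools rather than a new analytical ingredient.
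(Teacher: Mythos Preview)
Your proposal is correct and follows essentially the same route as the paper: Stein extension from $\Omega$ to $\mathbb{R}^3$, then application of the standard spline quasi-interpolant on $\Omega_0$ from \cite{beirao2014} or \cite{buffa2016b}, with the Dirichlet case handled by the assumption $\Gamma_D \subset \partial \Omega_0 \cap \partial \Omega$. Your treatment of the Dirichlet case is slightly more explicit than the paper's (which simply says ``a similar idea applies''), but the underlying mechanism is the same.
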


We are now in the position to write our first discrete problem. We set  $V_h(\Omega) = \{ \VV_h(\Omega,\Gamma_D)^k \} $ (as before, $k=1$ or $3$),  and we wish to solve:
\begin{equation}
  \label{eq:varform-discr}
 \text{Find } u_h\in V_h(\Omega)\  : \    a(u_h,v_h) = \int_\Omega f\ v_h  + \int_{\Gamma_N} g\, v_h \qquad \forall v_h\in V_h (\Omega)\,.
\end{equation}
As $V_h(\Omega) \subset V $, both continuity and coercivity hold true, which implies that the discrete problem is well posed and that the solution depends continuously on the data.  

As a matter of fact, two types of problems may appear when trying to construct the linear system for the problem \eqref{eq:varform-discr}: 
\begin{enumerate}
\item We need to be able to compute integrals as $\int_{Q\cap \Omega}  \xi  $ where $\xi$ is a regular function (say a multiplication of splines, regular coefficients, Jacobians of transformations) but the integration domain is not known analytically for all $Q\in  \T^\Gamma_h(\Omega)  $.
\item The contribution of all those basis functions whose support intersects the computational domain only for a small portion may generate bad conditioning of the stiffness matrices. 
\end{enumerate}

Clearly the first observation is crucial since, as it is, the assembly of the stiffness matrix for the problem \eqref{eq:varform-discr} is not possible. The next two subsections are devoted to the design of an algorithm to compute integrals accurately. 
The latter problem is discussed in Section \ref{sec:conditioning}.

\subsection{Numerical integration error estimate}
\label{sec:integr_error_analysis}
In order to compute each contribution to the stiffness in an accurate way, we propose to: 
\begin{itemize}
\item[a)] approach each boundary $\gamma= Q\cap\partial \Omega$, for  $ Q \in \T_h^\Gamma(\Omega)$, with a suitable piecewise polynomial approximation surface (or line in 2D), that we call $\gamma_h$. \label{sec:integr_error_analysis_a}
\item[b)]  Construct a re-parameterization  of each approximated element which allows for the definition of a suitable integration formula. 
\end{itemize}

 In the sequel of this section we prove that if the approximation at \review{a)} verifies certain assumptions, then the accuracy of the isogeometric method is preserved.  In the next subsection instead, we provide algorithms to construct such (piecewise) polynomial  approximations together with  a strategy to construct local accurate integration formulae. 

 Given a $Q  \in \T_h^\Gamma(\Omega) $,   we denote by  $ \gamma = Q\cap  \partial \Omega $ and by $\gamma_h$ its approximation of degree $r$ (when needed, a subindex $Q$ will be used to identify the element to which $\gamma$ and $\gamma_h$ belong) .   In what follows we assume that, for all elements $Q\in \T_h^\Gamma(\Omega)$, the construction of $\gamma_h$ fulfills the following Assumption, with constants that do not depend on the specific $Q$. 

\begin{assumption}
\label{ass:approx-domain} We assume that 
\begin{enumerate}
\item $\gamma_h \subset \bar{Q}$
\item  there exists a finite collection of local charts so that both $\gamma$ and $\gamma_h$ can be described as local graphs. More precisely, 
there exists a finite number $n_Q$ of local systems of coordinates $(X_i, Y_i)$, of corresponding  neighborhoods $\Delta_i \subseteq \mathbb{R}^{n-1} $ and, for each of those, two functions $\psi_i :\Delta_i \to \mathbb{R}$ and $\psi_{h,i}: \Delta_i \to  \mathbb{R}$:
\begin{align}
  \label{eq:localchart1}
\gamma_i=\{ (X_i, \psi(X_i)) \ ; \ X_i \in \Delta_i\} \subseteq \gamma\,, \qquad \gamma_{h,i}=\{ (X_i, \psi_h(X_i)) \ ; \ X_i \in \Delta_i\} \subseteq \gamma_h . 
\end{align}
Moreover, there exists a constant C such that 
\[ \bigcup_{i=1}^{n_Q}  \gamma_i = \gamma \;;\qquad  \bigcup_{i=1}^{n_Q}  \gamma_{h,i} = \gamma_h  \;; \quad  \sum _{i=1}^{n_Q} \mathrm{meas} (\gamma_i) \leq C\,  \mathrm{meas} (\gamma)   \;; \quad \sum _{i=1}^{n_Q} \mathrm{meas} (\gamma_{h,i}) \leq C  \mathrm{meas} (\gamma_h) .  \]
\item  the following approximation property holds: 
\[ \| \psi - \psi_h\|_{W^{m,\infty}(\Delta_i)} \leq C h^{r+1-m} \qquad m=0,1\,,   \]
where $r$ is the degree of the approximation $\gamma_h$ of $\gamma$. 
\end{enumerate}
\end{assumption} 

 Reminding that $\mathsf{Q} = Q\cap \Omega$, we denote now by $\mathsf{Q}_h$ the element obtained from $\mathsf{Q}$ by replacing its boundary $\gamma \subset \partial \Omega$ by the corresponding $\gamma_h$. 

Similarly, we denote by $\Omega_h$ the domain obtained by replacing each $\mathsf{Q}$, associated with $Q\in \T_h^\Gamma(\Omega)$ with the corresponding $\mathsf{Q}_h$; by $\Gamma_{h,N}$ the approximation of the Neumann boundary;  and by $a_h(\cdot,\cdot)$ and $L_h(\cdot{}) $ the bilinear and right hand side forms for the problem \eqref{eq:varform-discr} when $\Omega$ is replaced by $\Omega_h$. Moreover, we assume that the data of the problem $f$ and $g$ are restrictions of $\tilde{f}$ and  $\tilde{g}$ that are defined in $\mathbb{R}^3$ and belong to the corresponding Sobolev space (i.e., for $f\in H^s(\Omega)$ then $\tilde{f} \in H^s(\mathbb{R}^3)$, and if $g\in H^{s-1/2}(\Gamma_N)$ then $\tilde{g}\in H^s(\mathbb{R}^3)$).
\review{As for the solution $u$, by a little abuse of notation, we denote by $\tilde{u}$ its Sobolev extension, as defined in the well known \cite[Theorem 5, p.181]{Stein70}. We will make use of the following 
\[ \| \tilde{u} \|_{H^s(\Omega_h) }  \leq C  \| {u} \|_{H^s(\Omega) }  \qquad \forall \; s>0\,,\]
where the constant $C$ does not depend on $s$. }
Thus, instead of \eqref{eq:varform-discr}, we solve the following problem:
\begin{equation}
  \label{eq:varform-discr2}
 \text{Find } \uu_h\in V_h(\Omega_h)\  : \    a_h(\uu_h,v_h) = L_h(v_h) = \int_{\Omega_h} \review{\tilde{f}}\ v_h  + \int_{\Gamma_{h,N}} \review{\tilde{g}}\, v_h \qquad \forall v_h\in V_h (\Omega_h)\,.
\end{equation}

In the sequel we prove that, under the Assumption \ref{ass:approx-domain}, the problem~\eqref{eq:varform-discr2} is an optimal approximation of \eqref{eq:varform}. Note that, for the sake of simplicity, we assume that on each approximated element, the integration is performed exactly. Of course, this is not the case in practice but, instead, the quadrature is chosen in order for the integration error to verify the classical assumptions of the finite element theory. Taking into account this error here would make our proofs unreasonably tedious, without adding much.  We refer the interested reader to \cite{Ciarlet2002}. 

Our approach is largely inspired by the papers \cite{Elliott2,Elliott1} and uses the technical results obtained in \cite{cermak83}. We basically adapt their approach to our problem. 

We are going to prove the following approximation result. 
\begin{theorem}
  \label{th:approx}
Let $\Omega  \subset \mathbb{R}^n$, $n=2, 3$  be a computational domain defined as in \eqref{eq:Omega}.
Let $u$ be the solution of \eqref{eq:varform}. 
We assume that $u\in H^{t+1}(\Omega)$, with $t+1>\frac{n}{2}+2$, and we denote by $\uu$ its Sobolev extension. Let $\uu_h$ be the solution of \eqref{eq:varform-discr2}.  Then, under the Assumption \ref{ass:approx-domain}, it holds: 
\begin{equation}
  \label{eq:optimal}
   \| \tilde{u} - \uu_h\|_{H^1(\Omega_h)} \leq  C  h^{t} \| u \|_{H^{t+1}(\Omega)}  + C h^{r} \| \uu \|_{H^{n/2+2+\alpha}(\Omega)} \qquad 2\leq t \leq p\,, \quad  0\leq r \leq p\,,
\end{equation}
where $p$ denotes the degree of the spline approximation, $r$ the degree of the local re-parameterization, $n=2, 3$ is the dimension of the ambient space, and $\alpha>0$. Moreover, it holds: 
\begin{equation}
  \label{eq:l2-est}
   \| \tilde{u} - \uu_h\|_{L^2(\Omega_h)} \leq C h^{t+1} \| u \|_{H^{t+1}(\Omega)}  + C h^{r+1} \| \uu \|_{H^{n/2+2+\alpha}(\Omega)} \qquad 2\leq t \leq p \,,\   0\leq r \leq p\;. 
\end{equation}
\end{theorem}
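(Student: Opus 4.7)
The plan is to follow the Strang-lemma philosophy for variational crimes induced by domain perturbation, along the lines of the cited works \cite{cermak83,Elliott1,Elliott2}, and then close the $L^2$ estimate by an Aubin--Nitsche duality argument.

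First I would establish an abstract error decomposition. Since the coefficients of $a$ are (implicitly) extended to a neighborhood of $\Omega_0$, the form $a_h$ on $V_h(\Omega_h)$ inherits continuity and coercivity with constants independent of $h$. A standard manipulation using Galerkin orthogonality for $\tilde u_h$ gives the Strang-type bound
\begin{equation*}
\|\tilde u - \tilde u_h\|_{H^1(\Omega_h)} \leq C \inf_{v_h \in V_h(\Omega_h)} \|\tilde u - v_h\|_{H^1(\Omega_h)} + C \sup_{0\neq w_h \in V_h(\Omega_h)} \frac{|a_h(\tilde u, w_h) - L_h(w_h)|}{\|w_h\|_{H^1(\Omega_h)}} .
\end{equation*}
The first term is handled by choosing $v_h = \Pi_h^{(D)}(\tilde u)$ and applying Lemma \ref{lem:approx}, noting that the Stein extension $\tilde u$ lies in $H^{t+1}$ of any bounded domain containing $\Omega_h$ with norm controlled by $\|u\|_{H^{t+1}(\Omega)}$. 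This contributes the $C h^t \|u\|_{H^{t+1}(\Omega)}$ term.

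Second, I would treat the consistency error. Since $u$ solves \eqref{eq:varform} in $\Omega$ and $w_h$ can be interpreted on $\Omega \cup \Omega_h$ as a spline supported on $\Omega_0$, one writes
\begin{equation*}
a_h(\tilde u, w_h) - L_h(w_h) = \bigl[a_h(\tilde u, w_h) - a(\tilde u, w_h)\bigr] - \bigl[L_h(w_h) - L(w_h)\bigr] ,
\end{equation*}
so every term reduces to an integral over the symmetric difference $\Omega \triangle \Omega_h$ (for the volume contributions) or to a difference of surface integrals over $\Gamma_N$ and $\Gamma_{h,N}$. By Assumption \ref{ass:approx-domain}(3), $\Omega \triangle \Omega_h$ is a tubular region of normal thickness $O(h^{r+1})$ around $\partial\Omega$. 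Using the local charts from Assumption \ref{ass:approx-domain}(2), the skin integrals decompose as a one-dimensional integration in the normal direction (which produces the $h^{r+1}$ factor) times an $(n{-}1)$-dimensional integration on $\gamma_h$. The hypothesis $\tilde u \in H^{n/2+2+\alpha}(\Omega)$ provides, via Sobolev embedding, pointwise $C^2$ control of $\tilde u$, which is used to bound the coefficients in $L^\infty$; the remaining boundary integration against $w_h$ is controlled by a trace inequality on $\gamma_h$ yielding the $C h^r \|\tilde u\|_{H^{n/2+2+\alpha}(\Omega)}$ contribution. The surface-integral differences on $\Gamma_N$ versus $\Gamma_{h,N}$ are treated identically, with the additional care of accounting for the mismatch between the outward normals of $\Omega$ and $\Omega_h$.

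Finally, the $L^2$ estimate \eqref{eq:l2-est} follows by the Aubin--Nitsche trick applied to the adjoint problem on $\Omega_h$ with right-hand side $\tilde u - \tilde u_h$: the adjoint solution gains two degrees of Sobolev regularity, its best $H^1$-approximation by $V_h(\Omega_h)$ then gains an extra factor $h$, and duality absorbs this gain into both the approximation and the consistency contributions, giving $h^{t+1}$ and $h^{r+1}$ respectively.

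The main obstacle is unquestionably the consistency step: a naive Cauchy--Schwarz on the skin yields only $O(h^{(r+1)/2})$ because $\mathrm{meas}(\Omega \triangle \Omega_h) = O(h^{r+1})$. To reach the claimed $O(h^r)$ one must fully exploit the tubular structure -- integrating first in the normal direction to extract a clean $h^{r+1}$, shifting derivatives off $w_h$ via integration by parts where necessary, and leveraging the pointwise $C^2$ regularity of $\tilde u$ to bound the resulting curvature/Jacobian factors uniformly, as done in \cite{cermak83,Elliott1,Elliott2}.
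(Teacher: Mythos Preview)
Your proposal is essentially the same as the paper's proof: Strang decomposition, approximation via Lemma~\ref{lem:approx}, consistency handled with the strip estimates of \cite{Elliott1} and the Neumann mismatch estimate of \cite{cermak83} (together with the Sobolev embedding $H^{n/2+2+\alpha}\hookrightarrow W^{2,\infty}$), and Aubin--Nitsche for the $L^2$ bound.

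Two small differences are worth flagging. First, the paper does not keep the consistency term in the form $[a_h-a](\tilde u,w_h)-[L_h-L](w_h)$; it integrates by parts in $a_h(\tilde u,\xi_h)$ at the outset to obtain
\[
L_h(\xi_h)-a_h(\tilde u,\xi_h)=\int_{\Omega_h\setminus\Omega}(\tilde f-A\tilde u)\,\xi_h+\int_{\Gamma_{h,N}}(\tilde g-\nabla\tilde u\cdot n_h)\,\xi_h,
\]
so the volume term is already a residual supported on the skin and the surface term is exactly the Neumann mismatch to which \cite{cermak83} applies. Your $[a_h-a]$ decomposition leaves gradient products on the skin, which---as you yourself note---forces an integration by parts anyway to avoid the $h^{(r+1)/2}$ loss; the paper simply does this step first. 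Second, for the duality argument the paper poses the adjoint problem on the \emph{fixed} domain $\Omega$ (not on $\Omega_h$ as you suggest), which gives an $H^2$ regularity constant independent of $h$ for free; posing it on $\Omega_h$ would require you to argue that this constant is uniform in $h$, an avoidable complication.
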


\begin{remark}
    Indeed, the error due to the geometry approximation depends upon the $W^{2,\infty}(\Omega)$ norm of the (extension of) the solution $u$, and we have used here the continuous Sobolev embedding $H^{n/2+\alpha}(\Omega) \subset L^\infty(\Omega)$.  
\end{remark}

\begin{proof}
  By application of the Strang Lemma, see e.g., \cite{Ciarlet2002}, and using coercivity and continuity,  we can control the distance between $\uu_h$ and any function $w_h\in  V_h(\Omega_h)$ (sometimes below we also use $\chi_h =  \uu_h - w_h$) as follows:
\begin{equation}
\begin{split}
    \label{eq:1}
    \alpha \| \uu_h - w_h\|^2_{H^1(\Omega_h)}  & \leq a_h(\uu_h - w_h, \uu_h - w_h)  = a_h(\uu_h,\chi_h) - a_h(w_h,\chi_h) = L_h(\chi_h) - a_h(w_h,\chi_h) \\
& \leq a_h(\uu-w_h, \chi_h) + \| \chi_h\|_{H^1(\Omega_h)}\ \sup_{\xi_h\in V_h(\Omega_h) } \frac{L_h(\xi_h) - a_h(\uu,\xi_h)}{\| \xi_h\|_{H^1(\Omega_h)} }.  
\end{split}
\end{equation}
We need to estimate the supremum in the right hand side. 
\begin{equation}
  \label{eq:2}
    a_h(\uu, \xi_h) = \int_{\Omega_h} A \uu\,  \xi_h + \int_{\Gamma_{h,N}} (\nabla \uu \cdot n_h  )\, \xi_h\,,  \qquad L_h(\xi_h) = \int_{\Omega_h} \tilde{f} \,\xi_h + \int_{\Gamma_{h,N}} \tilde{g} \,\xi_h \,,
\end{equation}
where $n_h$ denotes the unit vector normal to $\Gamma_{h,N}$, and then 
\begin{equation}
  \label{eq:3}
L_h(\xi_h) - a_h(\uu,\xi_h) = \int_{\Omega_h\setminus \Omega} (\tilde{f} - A \uu ) \xi_h + \int_{\Gamma_{h,N}} ( \tilde{g} - \nabla \uu \cdot n_h)\,\xi_h = I + II\,,
\end{equation}
where we have used that $A \uu = f$ pointwise for almost every $x\in \Omega$. The two terms $I$ and $II$ are studied separately, and we will make use of a few important estimates which hold true under the Assumption \eqref{ass:approx-domain}. We use the estimates shown in, e.g., \cite[Lemma 4.1]{Elliott1}, but consider their local versions. For all $Q \in  {\cal T}_h^\Gamma$ that for all $w\in H^1(Q)$, letting  $\QQ= Q\cap \Omega $ and $\QQ_h$ is its approximation,  it holds:
\begin{equation}
  \label{eq:elliott}
  \| w\|_{L^2(\QQ_h\setminus \QQ)}\leq C (h^{r+1} | w|_{H^1(\QQ\setminus \QQ_h)} + h^{\frac{1}{2}(r+1)} \| w\|_{L^2(\partial\QQ_h)})\,,
\end{equation}
where the constant $C$ does not depend on the specific $Q$, neither on the way $\gamma$ or $\gamma_h$ cuts the element $Q$.
Keeping in mind that the trace operator is continuous from $L^2(\partial \QQ_h)$ to $H^1(Q)$, with a continuity constant that does not depend on the way $\gamma_h$ cuts the element $Q$, sometimes we make use of the following estimate of the right hand side: 
\begin{equation}
  \label{eq:elliott_2}
  h^{r+1} | w|_{H^1(\QQ_h\setminus \QQ)} + h^{\frac{1}{2}(r+1)} \| w\|_{L^2(\partial\QQ_h)} \leq C  h^{\frac{1}{2}(r+1)}  \| \review{w} \|_{H^1(Q)}\,.
\end{equation}

We also use then the estimates proved by \v{C}erm\'ak in \cite[Lemma 3.2 and Lemma 3.3]{cermak83}, and we apply them again locally to a $\QQ$.  For all $w\in W^{2,\infty}(Q)$, verifying $\nabla w \cdot n - g =0 $ on $\gamma= Q \cap \partial \Omega$, it holds: 
\begin{equation}
  \label{eq:cermak}
 \left\| \nabla w \cdot n_h - \tilde{g} \right\|_{L^2(\gamma_{h})} \leq C h^{r} \| w\|_ {W^{2,\infty}(Q)}\,. 
\end{equation}

We start by estimating the term $I$ in the r.h.s.\ of \eqref{eq:3}, using \eqref{eq:elliott}:
\begin{equation*}
  \begin{aligned}
      I & \leq  \sum_{Q\in {\cal T}_h^\Gamma} \| \tilde{f} - A \uu \|_{L^2(\QQ_h\setminus\QQ)} \|\xi_h\|_{L^2(\QQ_h\setminus\QQ)} \\
& \leq C \sum_{Q\in {\cal T}_h^\Gamma}  \big[  h^{r+1} |\tilde{f} - A \uu  |_{H^1(\QQ_h\setminus \QQ)} + h^{\frac{1}{2}(r+1)} \| \tilde{f} - A \uu \|_{L^2(\partial\QQ_h)} \big]\cdot \\
& \qquad \qquad \big[   h^{r+1} |\xi_h  |_{H^1(\QQ_h\setminus \QQ)} + h^{\frac{1}{2}(r+1)} \| \xi_h \|_{L^2(\partial\QQ_h)} \big]\,.
  \end{aligned}
\end{equation*}

By \eqref{eq:elliott_2}, we can bound the r.h.s.\ as:
\begin{equation}
  \label{eq:9}
  \begin{aligned}
     I & \leq C  \sum_{Q\in {\cal T}_h^\Gamma}   (h^{r+1} \| \xi_h\|_{L^2(\partial\QQ_h)}  + h^{\frac{3}{2}(r+1)} \| \xi_h\|_{H^1(\QQ_h)} ) \| \tilde{u} \|_{H^3(Q)}\\ 
& \leq C  (h^{r+1} \| \xi_h\|_{L^2(\partial\Omega_h)}  + h^{\frac{3}{2}(r+1)} \| \xi_h\|_{H^1(\Omega_h)} ) \| u \|_{H^3(\Omega)}\\ 
& \leq C h^{r+1} \|  \xi_h\|_{H^1(\Omega_h)}  \| u \|_{H^3(\Omega)}\,,
  \end{aligned}
\end{equation}
where we have used the continuity of the trace operator from $L^2(\partial \Omega_h) $ to $H^1(\partial \Omega_h)$, and that its continuity constant does not depend on $h$, but may depend on the constants in the Assumption \ref{ass:approx-domain}.  

We should now estimate the second term in the r.h.s.\ of \eqref{eq:3}, using \eqref{eq:cermak} \review{and the Sobolev embedding theorem. We have, for $\alpha>0$:}
\begin{equation}
  \label{eq:EsII}
  \begin{aligned}
II & \leq  \sum_{Q\in {\cal T}_h^\Gamma} \|  \nabla \uu - \tilde{g}\|_{L^2(\gamma_h)} \| \xi_h\|_{L^2(\gamma_h)}   \leq C \sum_{Q\in {\cal T}_h^\Gamma}  h^{r}  \|\tilde{u} \|_{W^{2,\infty}(Q)}  \| \xi_h\|_{L^2(\gamma_h)}\\
& \leq C  h^{r}  \|\tilde{u} \|_{W^{2,\infty}(\Omega)}  \| \xi_h\|_{H^{1}(\Omega_h)} \leq C  h^{r}  \|\tilde{u} \|_{H^{n/2+2+\alpha}(\Omega_h)} \| \xi_h\|_{H^{1}(\Omega_h)} \,.
 \end{aligned}
\end{equation}

Thus, using \eqref{eq:9}, \review{\eqref{eq:EsII}} and \review{the continuity of the Sobolev extension,} we obtain, 
\[ | L_h(\xi_h) - a_h(\uu,\xi_h) |  \leq  C h^{r} \| \xi_h\|_{H^{1}(\Omega_h)}  \|{u} \|_{H^{n/2+2+\alpha}(\Omega)}\,, \]
which, together with \eqref{eq:1}, proves \eqref{eq:optimal}. 
We are left now with the proof of \eqref{eq:l2-est} and we are going to use a simple and classical duality argument. Our procedure is very similar to \cite{bramble94} and \cite{Elliott1}. 

It holds 
\begin{equation}
  \label{eq:10}
  \|\uu - \uu_h\|_{L^2(\Omega_h)} = \sup_{\eta\in L^2(\Omega_h)} \frac{\int_{\Omega_h} (\uu - \uu_h) \eta}{\|\eta\|_{L^2(\Omega_h)}}\,.
\end{equation}

We solve now, 
\[ A z = {\eta}\,, \qquad z = 0 \text{ on } \Gamma_D \,,\ \nabla z\cdot n= 0 \text{ on } \Gamma_N. \]
As a consequence of the regularity assumption we have done on the solution $u$, $z\in H^2(\Omega)$, and it holds: 
\[\| z\|_{H^2(\Omega)} \leq C \| \eta\|_{L^2(\Omega)}. \]
Now, we compute, for the right hand side of \eqref{eq:10}
\begin{equation*}
  \begin{aligned}
     \int_{\Omega_h} (\uu - \uu_h) \tilde{\eta} & = \int_{\Omega_h} (\uu - \uu_h) \tilde{A z} =  \int_{\Omega} (\uu - \uu_h) \tilde{A z} + \int_{\Omega_h\setminus\Omega} (\uu - \uu_h) \tilde{A z} = \\
&  = a(u - \uu_h,  z) + \int_{\Omega_h\setminus\Omega} (\uu - \uu_h) \tilde{A z}=  I + II \,,
  \end{aligned}
\end{equation*}
where we have used that $u - u_h =0 $ on $\Gamma_D$, and  $\nabla z\cdot n= 0 \text{ on } \Gamma_N.$ 
Now, we estimate the two terms $I$ and $II$ separately.
First, the term $II$ is simple to estimate by \eqref{eq:elliott} and \eqref{eq:elliott_2}:
\[ II \leq C h^{\frac{1}{2}(r+1)} \|\uu-\uu_h\|_{H^1(\Omega_h)} \| \tilde{Az}\|_{L^2(\Omega_h)} \leq C h \|\uu-\uu_h\|_{H^1(\Omega_h)} \|\eta\|_{L^2(\Omega)}\,, \] 
as $r\geq1$.
Now, we concentrate on the term $I$. It holds:
\[
\begin{aligned} 
a(u - \uu_h,  z)  = a_h(\uu-\uu_h,\tilde{z}) - (a_h-a)(\uu-\uu_h,\tilde{z})\,,
 \end{aligned}
\]
where the second term can easily be bounded with \eqref{eq:elliott} and \eqref{eq:elliott_2} as 
\[ (a_h-a)(\uu-\uu_h,\tilde{z})  \leq C h \|\uu-\uu_h\|_{H^1(\Omega_h)} \|z \|_{H^2(\Omega)}\,. \] 

Now, the first term  can be written as, for $z_h\in \VV(\Omega_h)$ being the best fit of $z$:
\begin{equation}
  \label{eq:5}
  a_h(\uu-\uu_h,\tilde{z})  = a_h(\uu-\uu_h, \tilde{z} - z_h) + a_h(\uu-\uu_h,  z_h) 
 \leq C h \|\uu-\uu_h\|_{H^1(\Omega_h)} \|z\|_{H^2(\Omega)} + |L_h(z_h)-a_h(\uu,z_h)|\,.
\end{equation}
The second term can be estimated, similarly as above, and with arguments similar to the ones used in \cite[Lemma 4.2]{Elliott1}, as: 
\[  |L_h(z_h)-a_h(\uu,z_h)| \leq C h^{r+ 1} \|z\|_{H^2(\Omega)} \| \uu\|_{W^{2,\infty}(\Omega)}\,.   \]
Thus, the estimate \eqref{eq:l2-est} holds true. 
\end{proof}

\begin{remark}
\label{rem:subopt}
  Going through the various steps in the proof, it is clear that the estimates \eqref{eq:optimal} and \eqref{eq:l2-est} may be improved, with the respect to the degree $r$ of the parameterization, as in few places we use suboptimal Sobolev inclusions, or estimates. Indeed, as we will see in the next section, numerical validation seems to suggest that the convergence is indeed slightly (less than half a order) faster for the examples we discuss.
\end{remark}

\subsection{Element-wise re-parameterizations for trimmed domains} 
\label{sec:local-rep}
As it was stated above, the use of boolean operations in the definition of the computational domain $\Omega$
makes the computation of integrals over $\Omega$ far from trivial.

Integrals are computed over every single element
and their contributions are added to the global result, as $\int_{\Omega}  \xi = \sum_{Q\in\Omega} \int_{Q\review{\cap\Omega}}\xi$.
Thus, we will distinguish the integrals over the two different families of elements $\T^{int}_h(\Omega)$ and $\T^\Gamma_h(\Omega)$:
\begin{equation}
\int_{\Omega} \xi = \mathlarger{\sum}_{Q\in\T^{int}_h(\Omega)} \int_{Q}\xi + \mathlarger{\sum}_{Q\in\T^{\Gamma}_h(\Omega)} \int_{Q\cap\Omega}\xi\,.
\end{equation}

The integrals over the B\'ezier elements $Q\in \T^{int}_h(\Omega)$ (including the integrals on their boundaries)
are computed, in a standard way, using a tensor-product Gauss-Legendre quadrature rule.
On the other hand, the integrals over the cut B\'ezier elements $Q\in \T^{\Gamma}_h(\Omega)$ require a special procedure.

In this section we present a novel technique for computing integrals over the
active part $\mathsf{Q}$ of cut B\'ezier elements. We describe our approach  in 3D, but it works verbatim in 2D.
Our technique has a lot in common with the integration strategy used in the Finite Cell methods (FCM), \cite{Rank2017,kudela_smart_2016}. Even if FCM is normally used as an immersed method, i.e., with $\mathbf{F}$ being the identity map, it has been generalized to non-trivial $\mathbf{F}$ in \cite{rank_shell_2011}, for thin walled structures.

While a comparison of the two approaches is beyond the scope of this paper, we anticipate that our method is likely to have a lower complexity, i.e. requires less evaluations, for coarse meshes. Indeed, when the mesh is coarse, and the trimmed B\'ezier elements have complex shapes, we do not perform any dyadic splitting which, in principle, generates several quadrature points.  
On the other hand, we expect the two techniques to be comparable for fine enough meshes.  

\review{An alternative approach for a high-order re-parameterization of the trimmed B\'ezier elements has been recently proposed in \cite{massarwi_volumetric_2019} for the case of 3D V-reps in which $\mathbf{F}$ is a non-trivial map.}

In order to compute the integrals over cut B\'ezier elements the first step is to identify
them and to obtain a V-rep of each of them.
This operation, denominated \emph{slicing}, is detailed in Section \ref{sec:slicing}.
In a second stage, a high-order re-parameterization is created for
every cut B\'ezier element. This re-parameterization will help us in building
quadrature rules adapted to the active part of every element.
This operation is described in Section \ref{sec:local-rep-bezier}.

Even if the techniques presented in this section are valid for both 2D and 3D domains, we restrict our discussion to the 3D case.
In addition, and for the sake of simplicity, hereinafter we assume that the domain $\Omega_0$ is 
composed by a single trivariate $T$, whose associated spline diffeomorphism is $\bF: \hat T \to T$.
This simplification does not constitute a limitation and does not prevent the methodology
described below to be applied to cases in which $\Omega_0$ is a multipatch trivariate.

\subsubsection{Slicing}\label{sec:slicing}
The slicing process consists in splitting the domain $\Omega$ into B\'ezier elements.
The goal of this operation is to identify the two families of elements
$\T^{int}_h(\Omega)$ and \review{$\T^{\mathsf{Q}}_h(\Omega)  : =   \{ \mathsf{Q}\ :\ \mathsf{Q}=Q\cap\Omega  \ :\ \forall Q\in \T^{\Gamma}_h(\Omega)\}$}.

The splitting is performed by recursively bisecting $\Omega$ along all the knot surfaces \review{in the parametric domain $\hat T$.
We denote \emph{knot surface} as the composition with $\bF$ of a knot plane of the parametric domain $\hat T$.
A knot plane is defined as a plane perpendicular to a parametric direction $d$ at a fixed knot value $d_k$.

The slicing procedure followed in this work is closely related to the one described in \cite[Section 3.1]{massarwi_volumetric_2019}, being its main steps summarized in Algorithm \ref{alg:slicing}.
Additionally, the procedure workflow is also illustrated in Figure \ref{fig:reparam:1}, where the knot surfaces are represented in yellow color.
}
\begin{figure}[t!]
\centering
\includegraphics[width=0.95\textwidth]{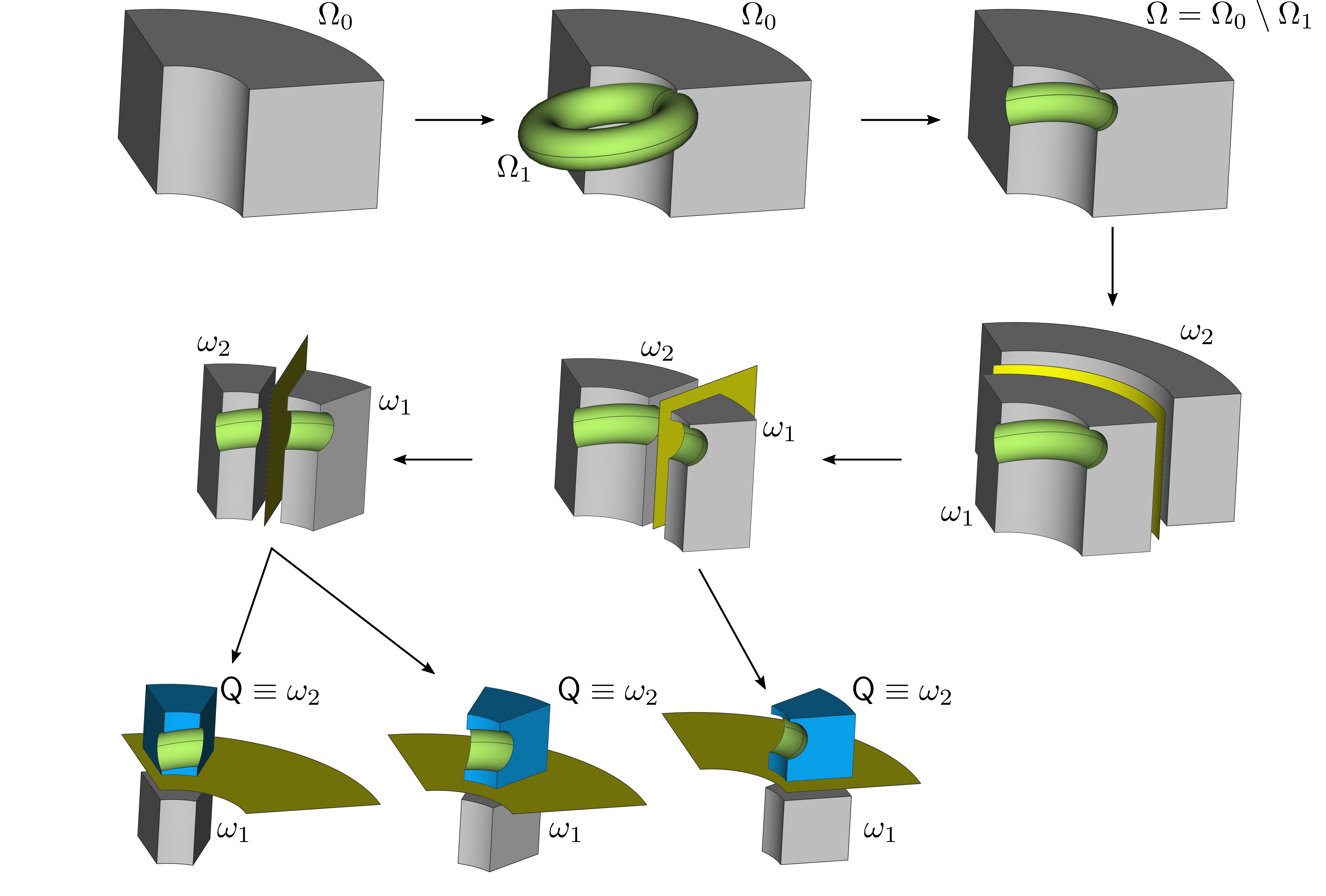}
\caption{Workflow of the creation of the domain $\Omega$ by boolean operations and slicing process.
As output of the slicing, the non-cut and cut elements are identified, and V-rep parameterizations of the
active part $\mathsf{Q}$ of the latter are obtained (in blue).}
\label{fig:reparam:1}
\end{figure}

\review{
We initially consider the V-rep $\omega$ ($\omega\equiv\Omega$ if it corresponds to the full domain) that has an associated B-spline tensor-product trivariate $T_\omega$ and a boundary representation $\gamma_\omega$ (a set of trimmed surfaces).
$\omega$ is then split according to a knot surface perperdicular to the longest parametric direction of the domain $\hat T_\omega$.
As a result of this splitting operation, denoted as {\bf SubdvWithKnotSrf} in Algorithm \ref{alg:slicing}, two new V-reps are obtained, $\omega_1$ and $\omega_2$, one on each side of the surface.
The new V-reps are subsequently sliced by applying the same algorithm recursively.

The slicing procedure finishes when all the B\'ezier elements have been identified as members of $\T^{int}_h(\omega)$ or $\T^{\mathsf{Q}}_h(\omega)$.
As mentioned above, the goal of the slicing operation is to identify cut and non-cut
B\'ezier elements. Therefore, if at a certain step the algorithm finds a V-rep that is not trimmed,
i.e., $\gamma_\omega$ coincides with the boundary of its associated trivariate $\partial T_\omega$, the slicing procedure is stopped for that particular V-rep.
All the B\'ezier trivariates contained in that particular V-rep belong to $\T^{int}_h(\omega)$ (steps 14-16 of Algorithm \ref{alg:slicing}).

In this work, this procedure has been implemented using the geometric kernel Open CASCADE \cite{OpenCASCADE}
for the splitting of the B-reps and IRIT \cite{IRIT} for managing trivariates.
}

\begin{algorithm}
\review{
	\textbf{Input}:\\
		\begin{tabularx}{\textwidth}{ll}
		$\omega=\{T_\omega,\gamma_\omega\}$: &V-rep composed of a single B-spline tensor-product trivariate $T_\omega$\\ &and its boundary representation $\gamma_\omega$ (a set of trimmed surfaces).
	\end{tabularx}
	\vspace{1mm}
	
	\textbf{Output}:\\
	\begin{tabularx}{\textwidth}{ll}
	$\T^\mathsf{Q}_h(\omega)$:& Set of trimmed B\'ezier trivariates (V-reps).\\
	$\T^{int}_h(\omega)$:& Set of non-trimmed active B\'ezier trivariates.
	\end{tabularx}
	\vspace{1mm}
	
\textbf{Algorithm}:
	\begin{algorithmic}[1]
 		\STATE $\T^\mathsf{Q}_h(\omega)$ := $\emptyset$,$\quad$ $\T^{int}_h(\omega)$ := $\emptyset$
		\IF {$\omega\neq\emptyset$}
		\IF {$\omega$ is trimmed ($\partial T_\omega \neq \gamma_\omega$)}
		  \IF {$T_\omega$ is a B\'ezier trivariate}
    		\STATE $\T^\mathsf{Q}_h(\omega)$ := $\omega\quad$ ($\mathsf{Q}\equiv\omega$)
		  \ELSE
  		\STATE $d_k, d$ := Find a (approx.) middle internal knot, $d_k$, along the largest\\ parametric direction $d$ ($u,v$ or $w$) of $\hat T_\omega$
	    \vspace{1mm}
  		\STATE [$\omega_1$, $\omega_2$] := \textbf{SubdvWithKnotSrf}(${\omega},d_k,d$)
	    \vspace{1mm}
  		\STATE [$\T^\mathsf{Q}_h(\omega_1)$, $\T^{int}_{h}(\omega_1)$] := \textbf{Slicing}(${\omega_1}$),$\quad$ [$\T^\mathsf{Q}_h(\omega_2)$, $\T^{int}_{h}(\omega_2)$] := \textbf{Slicing}(${\omega_2}$)
	    \vspace{1mm}
    	\STATE $\T^\mathsf{Q}_h(\omega)$ := $\T^\mathsf{Q}_h(\omega) \bigcup \T^\mathsf{Q}_h(\omega_1) \bigcup \T^\mathsf{Q}_h(\omega_2)$
	    \vspace{1mm}
    	\STATE $\T^{int}_h(\omega)$ := $\T^{int}_h(\omega) \bigcup \T^{int}_{h}(\omega_1) \bigcup \T^{int}_{h}(\omega_2)$
		  \ENDIF
		\ELSE
  		\FORALL[$\forall$ B\'ezier elements $T_{\omega,i}$ in $T_\omega$]{$T_{\omega,i} \in T_{\omega}$}
    		\STATE $\T^{int}_h(\omega)$ := $\T^{int}_h(\omega) \bigcup T_{\omega,i}$
  		\ENDFOR
		\ENDIF
		\ENDIF
 	  \RETURN $[\T^\mathsf{Q}_h(\omega)$, $\T^{int}_h(\omega)]$
	\end{algorithmic}
	\caption{\bf Slicing(${\omega}$): slicing procedure of a V-rep ${\omega}$.}
	\label{alg:slicing}
}
\end{algorithm}

\begin{remark}
When $\mathbf{F}$ is the identity map (as in many immersed methods), \emph{knot surfaces} are planes, and the slicing is a simple task.
Instead, when  $\mathbf{F}$ is not trivial, the slicing involve several geometric operations.
In this work, all these geometrical operations, e.g.\ 
computations of surface to surface intersections, points inversion, curves approximation, etc.,
are performed using the geometric kernels IRIT \cite{IRIT} and Open CASCADE \cite{OpenCASCADE}.
In both tools, and probably in most of the geometric kernels available,
these operations are executed with limited precisions (around $10^{-8}$ at best),
that are far from the maximum precision of 64-bits floating point numbers (around $10^{-16}$).

Even if these precisions are small enough for most of the industrial applications they target,
they impact the asymptotic behavior of our methods, for fine enough meshes. Some of the results presented in Section \ref{sec:experiments} reflect this issue.
\end{remark}

\subsubsection{Local re-parameterizations of cut B\'eziers}  \label{sec:local-rep-bezier}
After the slicing procedure described above we obtained a V-rep associated to the active part $\mathsf{Q}$ of every cut element.
In order to be able to compute numerical integrals over the cut elements, we re-parameterize each of those
V-reps with the purpose of building suitable integration formulae.

The element re-parameterization we propose in this work consists in creating a high-order hybrid polynomial mesh $\mathsf{Q}_h$, composed of tetrahedra and / or hexahedra. 
The created mesh $\mathsf{Q}_h$ is the union of $n_{\mathcal{L}}$ Lagrangian elements
$\mathsf{Q}_h=\cup^{n_{\mathcal{L}}}_{i=1} \mathcal{L}_i$, where $\mathcal{L}_i\subset\mathbb{R}^3$ denotes a single Lagrangian element.
Roughly, each element $\mathcal{L}_i$ is defined by a set of basis functions (Lagrange polynomials)
and the nodes associated to those functions.
This operation is sketched in the first step of Figure \ref{fig:reparam:2}.
\begin{figure}
\centering
\includegraphics[width=0.95\textwidth]{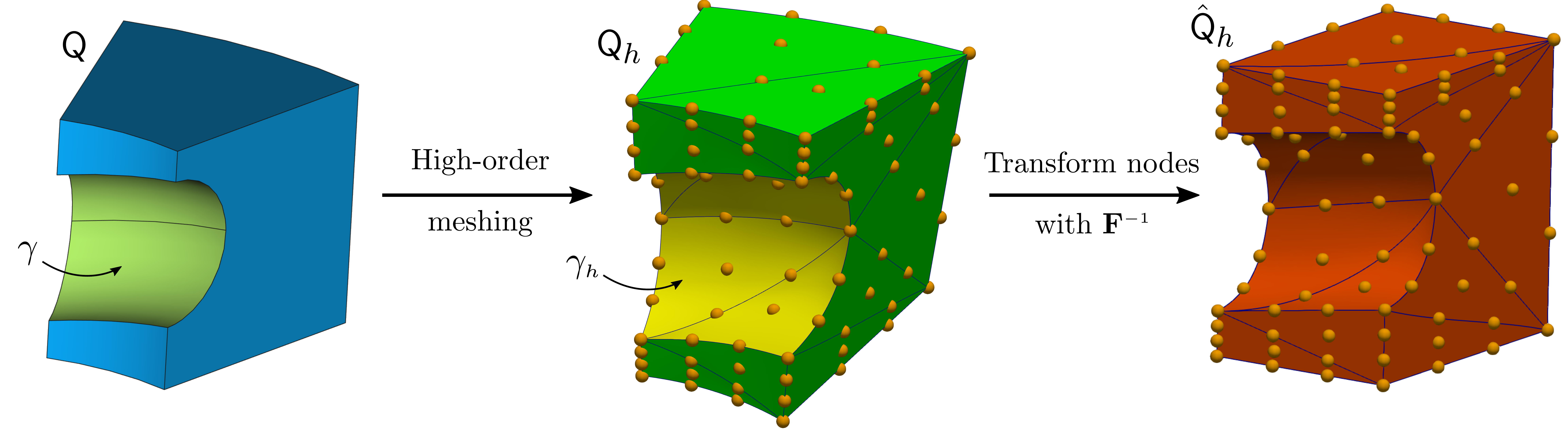}
\caption{Creation of a re-parameterization $\mathsf{Q}_h$ from the V-rep of a cut B\'ezier element $\mathsf{Q}$. In the right part of
the figure, the mesh $\mathsf{Q}_h$, contained in the physical domain, is transformed into
a mesh $\hat{\mathsf{Q}}_h$ in the parametric domain. The orange points represent the nodes of the tetrahedral meshes.}
\label{fig:reparam:2}
\end{figure}

In this work, the mesh $\mathsf{Q}_h$ is created using Gmsh software \cite{Gmsh}.
Gmsh is able to create high-order (up to degree 10) tetrahedral meshes from geometries defined by means of B-reps.
In addition, Gmsh can use internally Open CASCADE as geometric kernel for the treatment of B-rep geometries,
what makes the integration of both tools seamless.
Thus, in our implementation, we directly use the B-rep of every cut B\'ezier element created with  Open CASCADE and
generate a mesh $\mathsf{Q}_h$ with Gmsh.

In order to construct a 3D mesh, Gmsh first constructs meshes with linear elements
for the edges and faces of the B-rep. The nodes of the 1D and 2D elements
are created by sampling points on the surfaces and edges of the B-rep model, thus, it is guaranteed that
those points lay on the B-rep surfaces.
Afterwards, the mesh that fills the interior of the B-rep is created.

In a second step, Gmsh performs a degree elevation of the tetrahedral mesh elements by introducing new nodes.
As before, the new nodes created in this process, belonging to external faces of the mesh,
are sampled in the spline surfaces of the B-rep.
Hence, the boundary $\gamma$ is approximated with a high-order polynomial representation $\gamma_h$
(yellow surfaces in Figure \ref{fig:reparam:2}). Finally, as it is described in \cite{johnen_geometrical_2013,toulorge_robust_2013}, the mesh quality is optimized, \review{with an algorithm that guarantees} that all vertices lying on the B-rep surfaces will still belong to those surfaces. 
As our mesh is only used to place quadrature points, our lonely quality criteria is the absence of negative Jacobians: in practice, these relaxed quality requirements makes the task of creating high-order meshes much simpler that in finite element methods.
In addition, for the sake of computational efficiency, our aim is to create the coarsest possible valid mesh. 

The approximation $\mathsf{Q}_h$ obtained with this approach
complies with all the requirements stated in Assumption \ref{ass:approx-domain}, upon
which are obtained the estimate results presented in Section \ref{sec:integr_error_analysis}. Indeed, by construction, the boundary mesh $\gamma_h = \partial \mathsf{Q}_h$ is a high-order approximation of $\gamma = \partial \mathsf{Q}$. It may happen though that $\gamma_h \not\subset Q$: in this case, a dyadic subdivision of $\mathsf{Q}$ is performed and the meshing process is restarted on each concerned sub-element.

In a standard way, for calculating the operators involved in the problem \eqref{eq:varform-discr2}, their integrals are transformed from the physical to the parametric domain as:
\begin{equation}
\int_{\Omega} \xi = \int_{\hat \Omega} \xi\circ\bF\,\review{\det(\hat D\bF)}\,.
\end{equation}
Thus, once a valid mesh is created with Gmsh, we have a hybrid tetrahedral / hexahedral mesh of degree $r$, but defined in the physical space
(green mesh in Figure \ref{fig:reparam:2}). In order to use that mesh for building quadrature rules, we need to pull it back to the parametric domain \review{$\hat T$}.

Through a point inversion algorithm, all nodes of the mesh are pulled back to the parametric domain \review{$\hat T$} and the corresponding mesh $\hat{\mathsf{Q}}_h$, pre-image of $\mathsf{Q}_h$,   is generated. If needed, the Gmsh optimization referenced above is also applied to $\hat{\mathsf{Q}}_h$ in order to avoid negative Jacobians.

Once the mesh $\hat{\mathsf{Q}}_h$ is created,  in order to compute integrals over a single cut B\'ezier element we 
proceed as follows: we place a suitable quadrature rule (Gauss Legendre rule for hexahedra and collapsed Gauss Legendre for tetrahedra \cite{lyness_survey_1994}) defined over the reference element (e.g., the reference tetrahedron in Figure \ref{fig:reparam:3});
the quadrature coordinates and weights are transformed by pushing them forward
to all the elements in $\hat{\mathsf{Q}}_h$ in  \review{$\hat T$} (right part of Figure \ref{fig:reparam:3});
using those transformed quadratures, 
the contributions of all the elements are added to the global result. 
\begin{figure}
\centering
\includegraphics[width=0.95\textwidth]{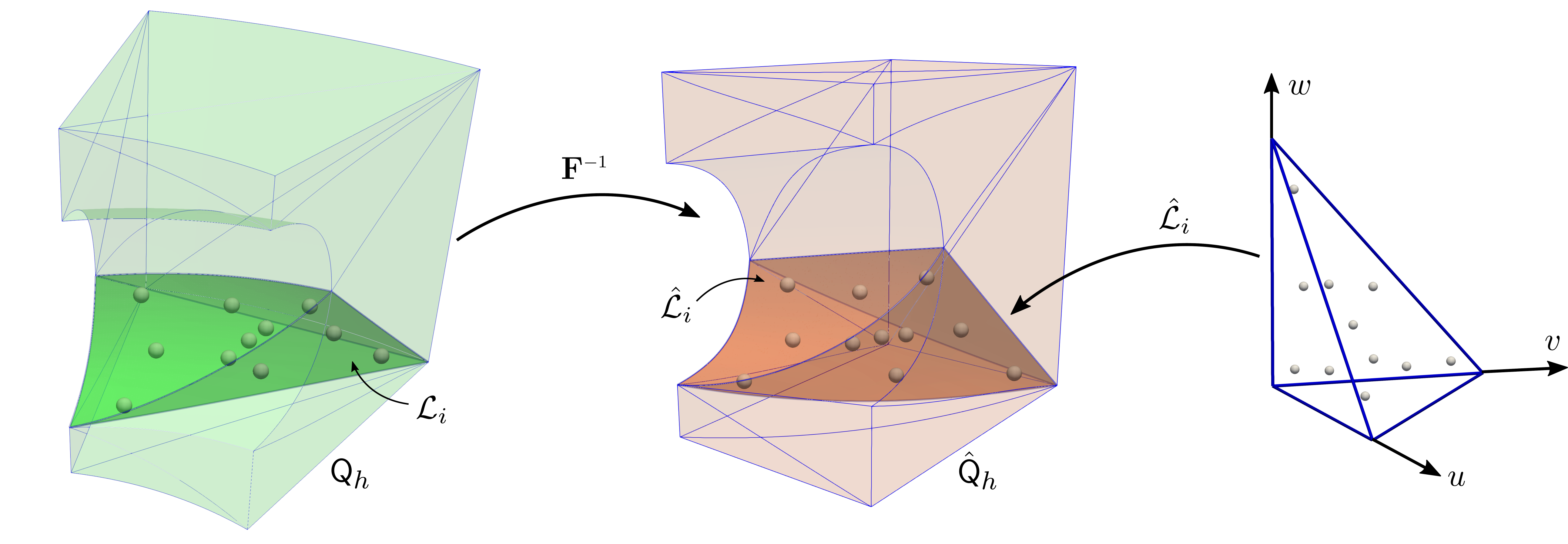}
\caption{Creation of quadrature rules by setting points in the reference element (right tetrahedron) and pushing them forward to the parametric domain (in red).
The quantities to integrate are pulled-back from the physical domain (in green) to the parametric one by applying the inverse transformation $\bF^{-1}$.
The images of the quadrature points in the different domains are presented as white points.}
\label{fig:reparam:3}
\end{figure}

\begin{remark}
In order to avoid the creation of the mesh $\hat{\mathsf{Q}}_h$ from $\mathsf{Q}_h$, an alternative
procedure would be to set the quadrature points in the elements of the physical mesh $\mathsf{Q}_h$ and
then transform their coordinates and weights back to the parametric domain $\hat T$ by applying the inverse transformation $\bF^{-1}$.
This procedure presents the advantage of bypassing the creation of the mesh $\hat{\mathsf{Q}}_h$, however,
it is dependent of the integration formula chosen and must be repeated for different quadrature rules.
\end{remark}
\begin{remark}
 The choice of the quadrature rule we made is likely not optimal, and, more generally, the optimisation of the quadrature is not the main focus of this contribution and deserves further studies. 
\end{remark}
\begin{remark}
Even if the procedure presented above is applicable also to the 2D domains, in that case is far from optimal.
The first operation, the \emph{slicing}, is common for both 2D and 3D cases, however,
the way in which local re-parameterization of the trimmed elements is created can be greatly simplified in the 2D case.
See, for instance, \cite{Marussig2018, kudela_efficient_2015, benson15}.
The procedure used in this work for the 2D experiments presented in Section \ref{sec:experiments}
is based on the use of patterns as described in \cite{Marussig17,schmidt_isogeometric_2012,kim_isogeometric_2010}.
When a cut B\'ezier element does not correspond to any pattern, we use the methodology presented above.
\end{remark}

\subsection{Linear system preconditioning} 
\label{sec:conditioning}
The error estimates obtained in Section \ref{sec:integr_error_analysis} state
that, under certain assumptions, guaranteed by the constructions presented in Section \ref{sec:local-rep},
the accuracy and optimality of the isogeometric method are preserved.

Nevertheless, due to the fact that the support of some basis functions is cut, the linear system
can be potentially ill-conditioned. This is due to the fact that by reducing the active support
of some basis functions, we are also reducing the minimum eigenvalues of the linear system matrix, while the maximum eigenvalue is
not affected.

A detailed study of conditioning problems that derive from
the use of trimmed domains is beyond the scope of this work, but we include here a brief discussion of the preconditioner
used in the numerical examples presented in Section \ref{sec:experiments}.

In this work we use a diagonal scaling preconditioner.  
Thus, the original linear system can be written as $\bm{\mathsf{A}}\bm{\mathsf{x}}=\bm{\mathsf{b}}$,
where $\bm{\mathsf{A}}\in\mathbb{R}^{n\times n}$ is the stiffness matrix, $\bm{\mathsf{x}},\,\bm{\mathsf{b}}\in\mathbb{R}^{n}$ are the solution and right-hand-side vectors, respectively, and $n$ is the system size.
By symmetrically preconditioning the system it becomes:
\begin{subequations}
\begin{align}
\bm{\mathsf{D}}\bm{\mathsf{A}}\bm{\mathsf{D}}\bm{\mathsf{y}}&=\bm{\mathsf{D}}\bm{\mathsf{b}}\,,\\
\bm{\mathsf{x}}&=\bm{\mathsf{D}}\bm{\mathsf{y}}\,,
\end{align}
\end{subequations}
where $\bm{\mathsf{D}}$ is the diagonal scaling preconditioner defined as
\begin{align}
\bm{\mathsf{D}} &= \text{diag}\left( 1/\sqrt{A_{1,1}}\,,\,1/\sqrt{A_{2,2}}\,,\dots,\,1/\sqrt{A_{n,n}} \right)\,,
\end{align}
where $A_{i,i}$, with $i=1,\dots,n$, are the diagonal entries of the matrix $\bm{\mathsf{A}}$.
By applying this algebraic preconditioner, all the basis functions are renormalized respect to their contribution to the matrix $\bm{\mathsf{A}}$. 

This preconditioner's effectiveness is illustrated, for a series of elliptic problems,
in the numerical experiments gathered in Section \ref{sec:experiments}. Although in the context of the Finite Cell Method \cite{de_prenter_condition_2017}, this simple choice seems  insufficient to restore a stable behavior of the condition number, i.e., a  condition number independent of the way elements are cut,   in our numerical tests  we consistently experience a very good behavior of the preconditioned system. We believe this is due to the use, in our tests, of maximum continuity splines and we  defer the reader to \cite{buffa_minimal_2019} for further discussions. 

\review{
Possible alternatives to the diagonal scaling preconditioning are the ghost penalty techniques \cite{burman_ghost_2010,burman_cutfem:_2015};
the use of additive Schwarz preconditioners as proposed in \cite{de_prenter_preconditioning_2019,jomo_robust_2019} for finite cell and isogeometric discretizations, and extended in \cite{de_prenter_scalable_2019} to case of immersed IGA combined with multigrid methods; or the stable removal of basis functions, as proposed in \cite{Elfverson2018} in the context of CutIGA methods.}

\section{Numerical experiments} \label{sec:experiments}
In this section we perform a series of numerical experiments
that aim at illustrating the methodology presented in previous sections.

As a first example,
we present in Section \ref{sec:poisson_2D} a Poisson problem in a 2D circular domain
which will help us in supporting the theoretical results presented in Section \ref{sec:theory}.
Secondly, the well known plate with hole
test for 2D linear elasticity is presented (Section \ref{sec:plate_with_hole}).
A Poisson problem is used again in Section \ref{sec:poisson_3D} to validate our theoretical findings, 
but this time on a three-dimensional computational domain: a sphere, which is trimmed from a cube. 
Finally, the presented methodology is applied to study the elastic
behavior of a 3D mold with a trimmed interior cooling channel, which has a complex geometry.

The results shown in this section have been obtained using
an implementation of the methodology proposed above in an in-house code built
upon the isogeometric analysis library Igatools \cite{Igatools},
the solid modeling kernels IRIT \cite{IRIT} and OpenCASCADE \cite{OpenCASCADE} and 
the 3D mesh generator Gmsh \cite{Gmsh}.

\subsection{Poisson equation in a 2D domain}\label{sec:poisson_2D}
In this Section we study a Poisson problem in the interior of a circular domain $\Omega$:
\begin{subequations}
\begin{alignat}{2}
\Delta u &= f\quad&&\text{in }\Omega\,,\\
\nabla u\cdot\bm{n} &= g_n\quad&&\text{on }\partial\Omega\,.
\end{alignat}
\end{subequations}
The solution to the problem is
\begin{align}
u = \sin\left(\frac{2\pi}{L}x\right)\sin\left(\frac{2\pi}{L}y\right)\,,
\end{align}
and the associated source and Neumann terms are:
\begin{subequations}
\begin{align}
f  &= -\frac{8\pi^2}{L^2}\sin\left(\frac{2\pi}{L}x\right)\sin\left(\frac{2\pi}{L}y\right)\,,\\
g_n  &= \frac{2\pi}{L}\left( x\cos\left(\frac{2\pi}{L}x\right)\sin\left(\frac{2\pi}{L}y\right) + y\sin\left(\frac{2\pi}{L}x\right)\cos\left(\frac{2\pi}{L}y\right)\right)\,.
\end{align}
\end{subequations}
No Dirichlet condition is imposed, alternatively we impose weakly the condition
\begin{align}
\int_\Omega u = \int_\Omega\sin\left(\frac{2\pi}{L}x\right)\sin\left(\frac{2\pi}{L}y\right)=0\,,
\end{align}
in order to remove the constant part of $u$.

\subsubsection{Poisson equation in a 2D non-distorted domain}
The computational domain is built as the boolean intersection of a $L$-length square domain
and a circle, with radius $R$, centered at the origin (c.f.\ Figure \ref{fig:poisson_2D_sketch}). The squared domain carries a Cartesian mesh. 
As a matter of example, in Figure \ref{fig:poisson_2D_sketch_reparam_2} the re-parameterization of trimmed elements (in red) is shown together with the non-trimmed ones for a domain with $8\times8$ elements.
\begin{figure}[t!]
 \centering
 \subfigure[Sketch of the Poisson 2D problem.\label{fig:poisson_2D_sketch}]
 {\ifdraftversion\tikzsetnextfilename{figures_paper/poisson_2D_sketch}\input{poisson_2D_sin_sin/figures_paper/poisson_2D_sketch}\else\includegraphics{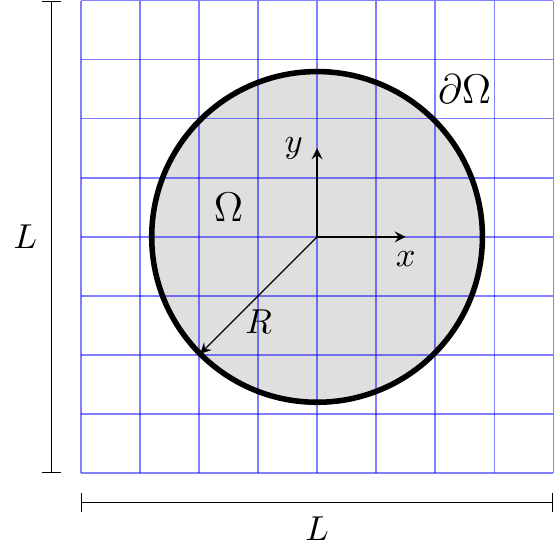}\fi}\hfill
 \subfigure[Physical domain $\Omega$. In red, re-parameterization of trimmed elements, in blue, non-trimmed ones.\label{fig:poisson_2D_sketch_reparam_2}]
 {\ifdraftversion\tikzsetnextfilename{figures_paper/mesh_non_distorted_sketch}\input{poisson_2D_sin_sin/figures_paper/mesh_non_distorted_sketch}\else\includegraphics{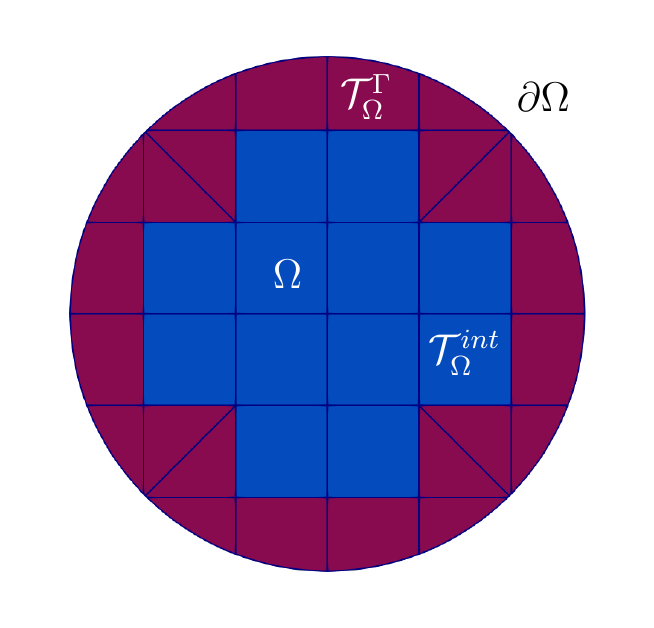}\fi}
 \caption{Sketch of the Poisson 2D problem (left) and illustration of the trimmed and non-trimmed elements (right) for a domain with $8\times8$ elements and $R=1$ and $L=2\,R/0.7$.}
\label{fig:poisson_2D_sketch_reparam}
\end{figure}
For this re-parameterization, as well as for all the numerical results gathered in this section, $R=1$ and $L=2\,R/0.7$ were used. As the image of each knot line is straight, 
the re-parameterization of the trimmed elements is performed  using a numerical precision $10^{-15}$, it is of the same degree as the one used for solution, and is the coarsest possible.

As a first result, the solution error in the $L^2$ and $H^1$ norms for different mesh sizes and degrees
is shown in Figures \ref{fig:poisson_2D_l2_h1_area_perimeter_l2} and \ref{fig:poisson_2D_l2_h1_area_perimeter_h1}, respectively,  with the choice $p=r$, in agreement to Theoreom \ref{th:approx}. For $p=4,5$ one quadrature point per direction is added in each element of the re-parameterization, and for $p=6$, two points per direction are added in order to recover the expected convergence rates.

Moreover, \ref{fig:poisson_2D_l2_h1_area_perimeter_area} and \ref{fig:poisson_2D_l2_h1_area_perimeter_perimeter}  show the error on the computation of area and perimeter which are bounded from below by the chosen geometric precision. 
Note that the limited accuracy of the re-parameterization for $p=6$ and $h\leq L/64$ (c.f.\ Figure \ref{fig:poisson_2D_l2_h1_area_perimeter_perimeter})
has an effect in the computed $L^2$ error norm for $p=6$ and $h=L/128$. 
Finally, from \ref{fig:poisson_2D_l2_h1_area_perimeter_area} and \ref{fig:poisson_2D_l2_h1_area_perimeter_perimeter},  the area and perimeter errors converge as $h^{p+1}$ and $h^{p+2}$ for odd and even degrees,
respectively.  This fact is due to the use of equidistant interpolation points for the approximation of the boundary, and the known fact that quadrature formulae on equidistant odd points are always even degree.
\begin{figure}
 \centering
 \subfigure[Solution error in the $L^2$ norm.\label{fig:poisson_2D_l2_h1_area_perimeter_l2}]
 {\ifdraftversion\tikzsetnextfilename{figures_paper/poisson_2D_sin_sin_l2}\input{poisson_2D_sin_sin/figures_paper/poisson_2D_sin_sin_l2}\else\includegraphics{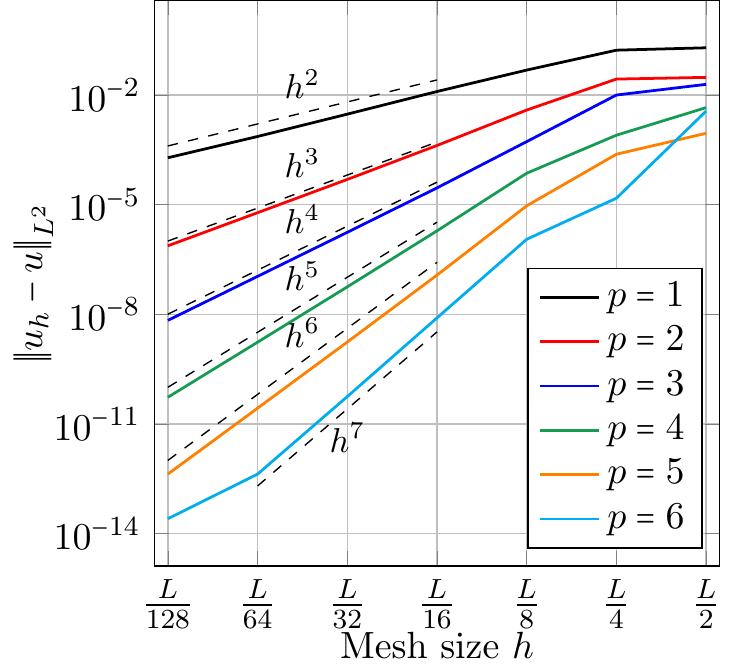}\fi}\hfill
 \subfigure[Solution error in the $H^1$ norm.\label{fig:poisson_2D_l2_h1_area_perimeter_h1}]
 {\ifdraftversion\tikzsetnextfilename{figures_paper/poisson_2D_sin_sin_h1}\input{poisson_2D_sin_sin/figures_paper/poisson_2D_sin_sin_h1}\else\includegraphics{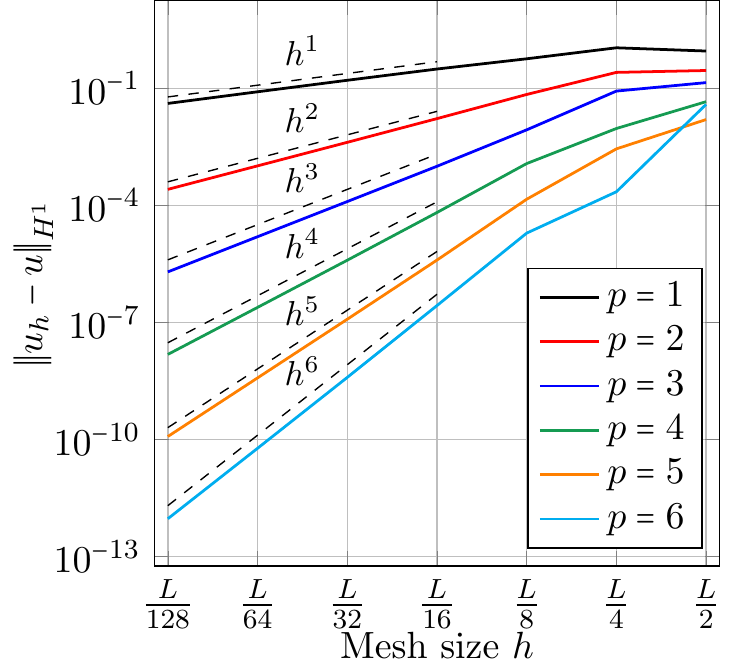}\fi}\\
 \subfigure[Area error.\label{fig:poisson_2D_l2_h1_area_perimeter_area}]
 {\ifdraftversion\tikzsetnextfilename{figures_paper/poisson_2D_sin_sin_area}\input{poisson_2D_sin_sin/figures_paper/poisson_2D_sin_sin_area}\else\includegraphics{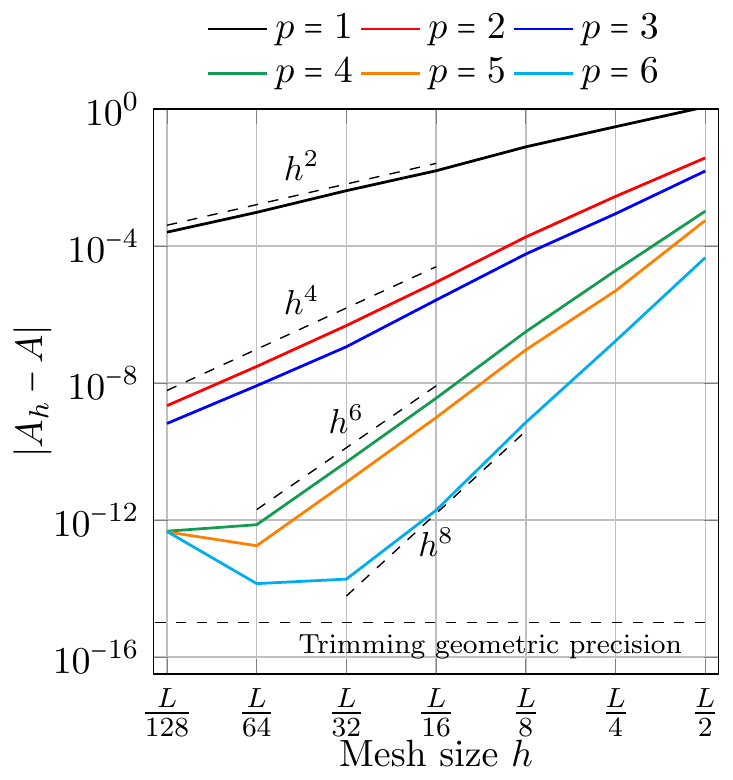}\fi}
 \subfigure[Perimeter error.\label{fig:poisson_2D_l2_h1_area_perimeter_perimeter}]
 {\ifdraftversion\tikzsetnextfilename{figures_paper/poisson_2D_sin_sin_perimeter}\input{poisson_2D_sin_sin/figures_paper/poisson_2D_sin_sin_perimeter}\else\includegraphics{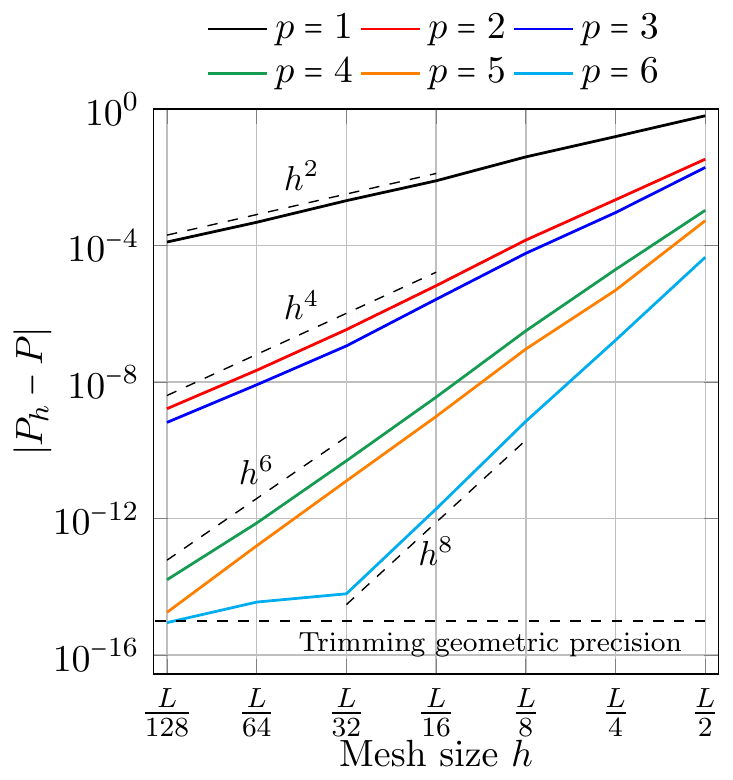}\fi}
 \caption{Poisson 2D problem: error of the solution $u_h$ in the $L^2$ and $H^1$ norms, and absolute error of area and perimeter computation, for different
 degrees.
 In all cases the re-parameterization of the trimmed elements is the coarsest possible and was performed with the same degree $p$ used for the discretization of the solution.}
 \label{fig:poisson_2D_l2_h1_area_perimeter}
\end{figure}

In order to illustrate the necessity of high-order re-parameterizations for the trimmed elements,
in Figure \ref{fig:poisson_2D_h1_different_pts} we show the effect
of using re-parameterization degrees $p_t$ lower than the discretization degree $p$.
In this figure we split the contribution to the $H^1$ error norm in two parts:
the contribution of the trimmed elements $\T^{\Gamma}_\Omega$ (solid lines) and
of the non-trimmed ones $\T^{int}_\Omega$ (dashed lines).

As it can be seen, when $p_t<p$ and $h$ becomes small
the consistency error starts to be dominant respect to the discretization error
and consequently the error optimality is lost.  For higher $p_t$, this phenomenon alleviates and the sub-optimality shows up on finer and finer meshes.
\begin{figure}
 \centering
 \subfigure[$p=3$.\label{fig:poisson_2D_h1_different_pts_3}]
 {\ifdraftversion\tikzsetnextfilename{figures_paper/poisson_2D_sin_sin_h1_p_3_different_pts}\input{poisson_2D_sin_sin/figures_paper/poisson_2D_sin_sin_h1_p_3_different_pts}\else\includegraphics{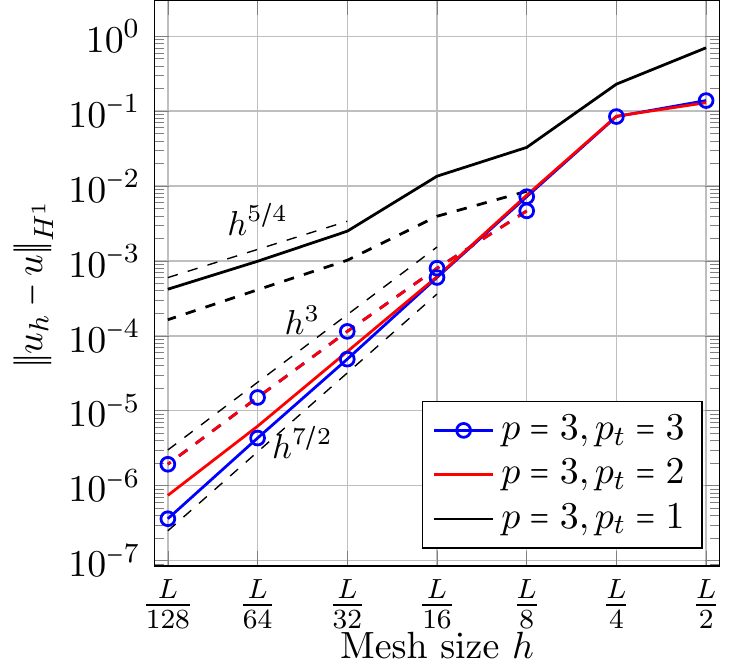}\fi}
 \subfigure[$p=5$.\label{fig:poisson_2D_h1_different_pts_5}]
 {\ifdraftversion\tikzsetnextfilename{figures_paper/poisson_2D_sin_sin_h1_p_5_different_pts}\input{poisson_2D_sin_sin/figures_paper/poisson_2D_sin_sin_h1_p_5_different_pts}\else\includegraphics{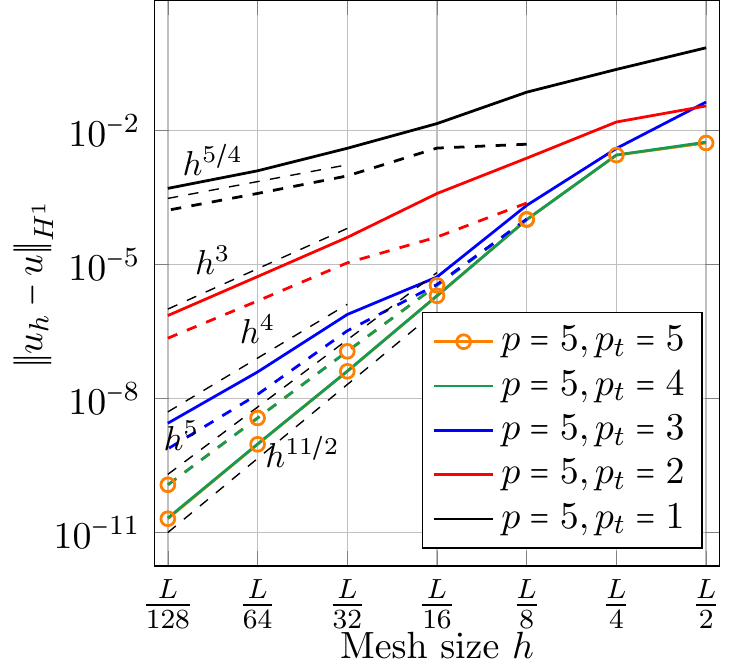}\fi}
 \caption{Poisson 2D problem: error in the $H^1$ norm of the solution $u_h$ for different discretization degrees $p$ using
 degree $p_t$ for the re-parameterization of the trimmed elements.
 The error norm is split in the contribution of the trimmed elements $\T^{\Gamma}_\Omega$ (solid lines) and the non-trimmed ones $\T^{int}_\Omega$ (dashed lines).}
 \label{fig:poisson_2D_h1_different_pts}
\end{figure}
\begin{figure}
 \centering
 \subfigure[$p=3$.]
 {\ifdraftversion\tikzsetnextfilename{figures_paper/poisson_2D_sin_sin_h1_p_3_pt_1_different_hts}\input{poisson_2D_sin_sin/figures_paper/poisson_2D_sin_sin_h1_p_3_pt_1_different_hts}\else\includegraphics{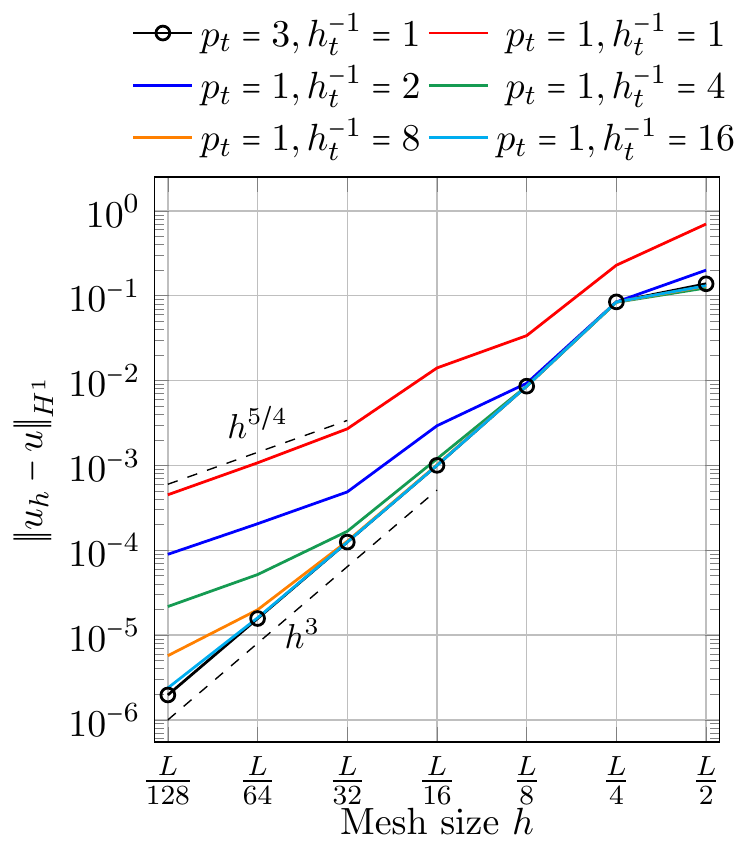}\fi}\hfill
 \subfigure[$p=4$.]
 {\ifdraftversion\tikzsetnextfilename{figures_paper/poisson_2D_sin_sin_h1_p_4_pt_1_different_hts}\input{poisson_2D_sin_sin/figures_paper/poisson_2D_sin_sin_h1_p_4_pt_1_different_hts}\else\includegraphics{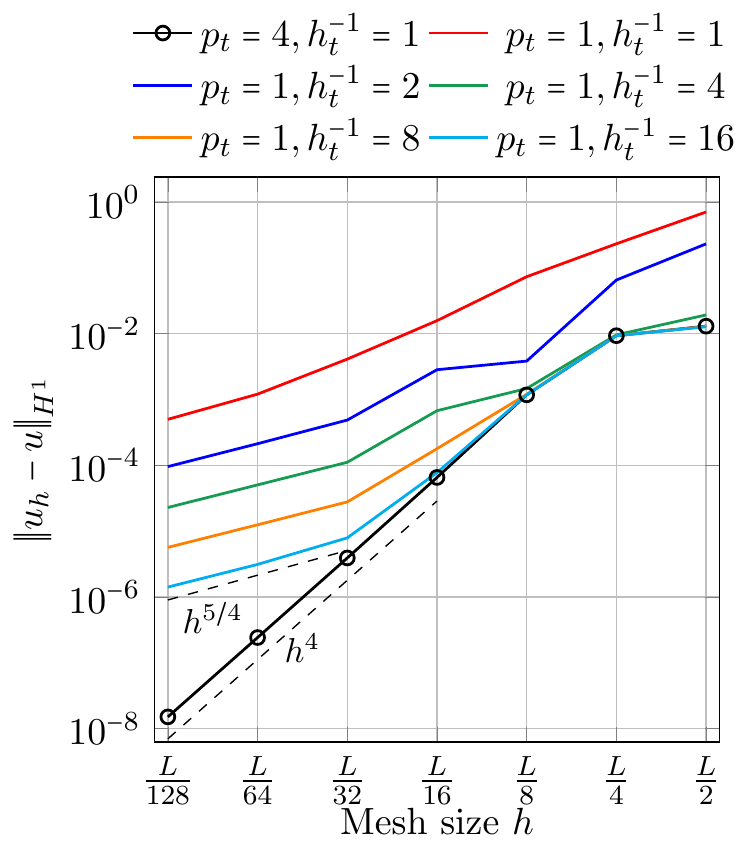}\fi}
 \caption{Poisson 2D problem: error in the $H^1$ norm
 for different solution degrees $p$ using a low-order re-parameterization ($p_t=1$) and an increasing number of re-parameterization tiles ($h^{-1}_t$) for every trimmed element.}
 \label{fig:poisson_2D_h1_different_hts}
\end{figure}

On the other hand, in Figure \ref{fig:poisson_2D_h1_different_hts}, despite the solution degree $p$,
the trimmed elements are re-parameterized with a low-order approximation ($p_t=1$).
 Nevertheless, instead of creating the coarsest possible re-para\-mete\-ri\-za\-tion, the trimmed elements
are approximated using more tiles. $h_t$ is the size of the tiles and
$h^{-1}_t$ measures the number of tiles used along each side of every trimmed element.
The use of a higher number of tiles has the effect of reducing the consistency error, thus its sub-optimality affects the total error for finer meshes. 

As anticipated in Remark \ref{rem:subopt}, the convergence rates we see in Figures \ref{fig:poisson_2D_h1_different_pts} and \ref{fig:poisson_2D_h1_different_hts} are slightly better than the ones proved true in Theorem \ref{th:approx}. 

Finally, in Figure \ref{fig:poisson_2D_conditioning} the stiffness matrices' condition numbers are reported
for the preconditioned and non-preconditioned cases. As it can be seen, the non-preconditioned case
(Figure \ref{fig:poisson_2D_conditioning_without}) presents very high condition numbers for all degrees.
This is controlled, as it can be appreciated in Figure \ref{fig:poisson_2D_conditioning_with}, using
the diagonal preconditioning described in Section \ref{sec:conditioning}. Additionally, the expected
$h^{-2}$ asymptotic behavior of the conditioning is recovered in the latter case.
\begin{figure}
 \subfigure[Without preconditioning.\label{fig:poisson_2D_conditioning_without}]
 {\ifdraftversion\tikzsetnextfilename{figures_paper/poisson_2D_sin_sin_conditioning}\input{poisson_2D_sin_sin/figures_paper/poisson_2D_sin_sin_conditioning}\else\includegraphics{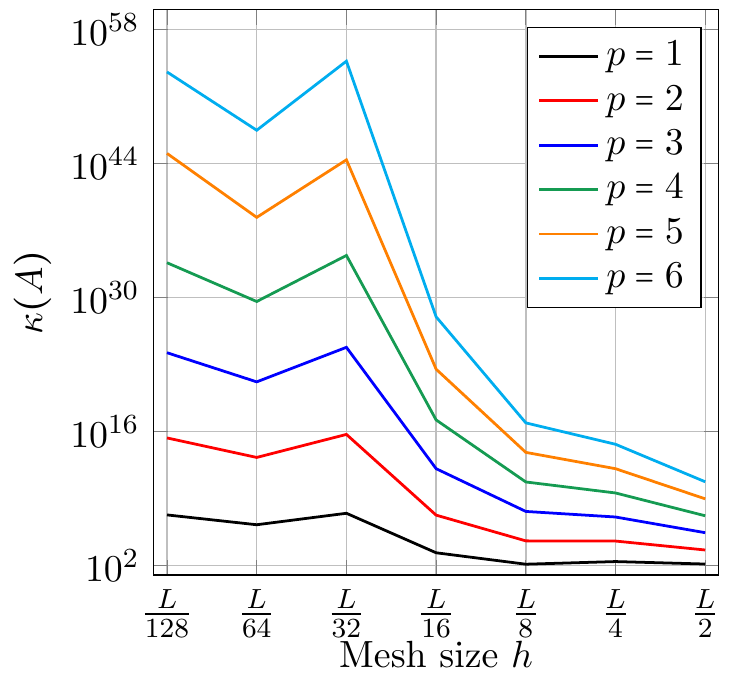}\fi}\hfill
 \subfigure[With diagonal scaling preconditioning.\label{fig:poisson_2D_conditioning_with}]
 {\ifdraftversion\tikzsetnextfilename{figures_paper/poisson_2D_sin_sin_conditioning_scaled}\input{poisson_2D_sin_sin/figures_paper/poisson_2D_sin_sin_conditioning_scaled}\else\includegraphics{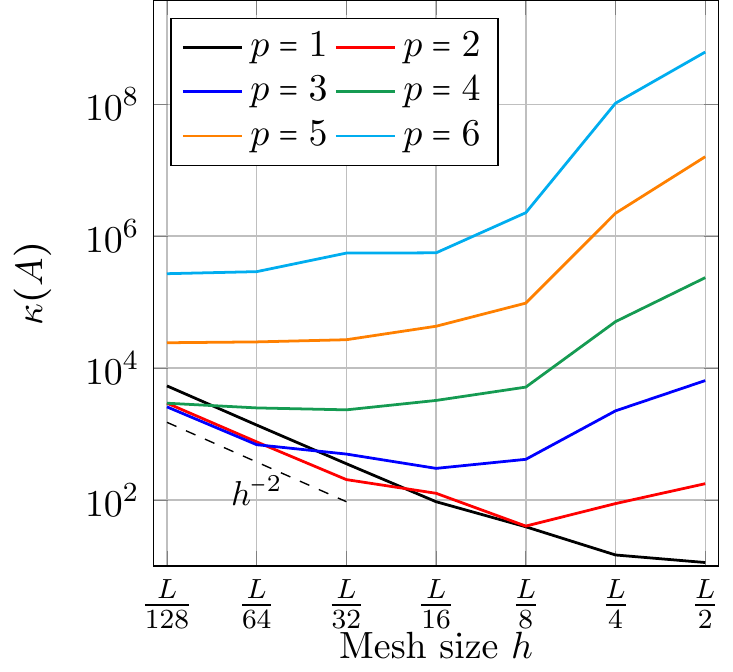}\fi}
 \caption{Poisson 2D problem: stiffness matrix conditioning with and without diagonal scaling preconditioning.}
 \label{fig:poisson_2D_conditioning}
\end{figure}

\subsubsection{Poisson equation in a 2D distorted domain}\label{sec:poisson_2D_distorted}
The same Poisson problem is solved but using a different parameterization of the computational domain.
The computational domain is still the interior of a circle, however the parameterization does not longer
correspond to a Cartesian mesh, but to a distorted one.
In Figure \ref{fig:poisson_2D_distorted_mesh} the trimmed parametric domain
is shown in the left side and the physical distorted domain in the right side. As described in Section \ref{sec:local-rep},  this time the geometric operations are performed approximately with a numerical precision of $10^{-8}$ that is far from the maximum precision
of 64-bits floating point numbers used in the calculations (around $10^{-16}$). 
\begin{figure}
 \centering
 \subfigure[Parametric domain.]
 {\ifdraftversion\tikzsetnextfilename{figures_paper/mesh_parametric_distorted_sketch}\input{poisson_2D_sin_sin/figures_paper/mesh_parametric_distorted_sketch}\else\includegraphics{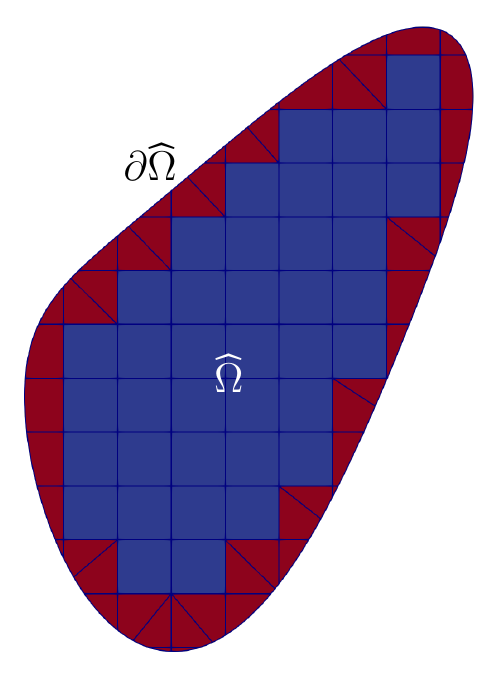}\fi}\hfill
 \subfigure[Physical domain.]
 {\ifdraftversion\tikzsetnextfilename{figures_paper/mesh_distorted_sketch}\input{poisson_2D_sin_sin/figures_paper/mesh_distorted_sketch}\else\includegraphics{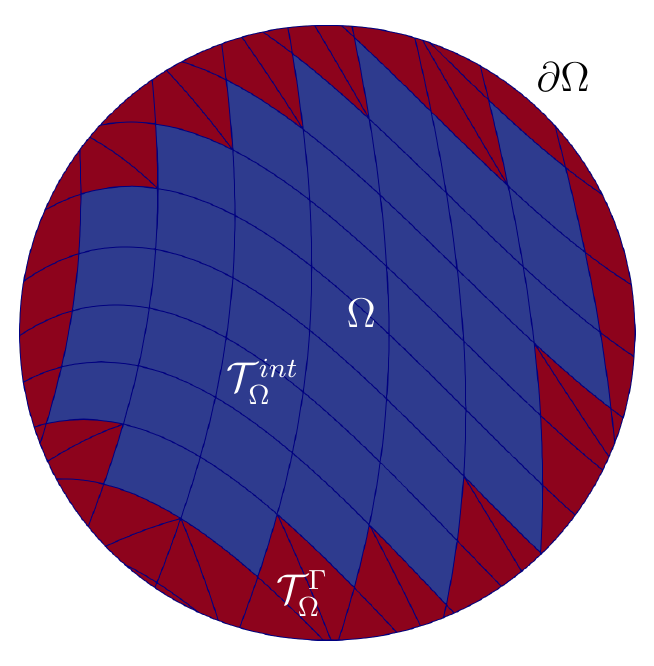}\fi}
 \caption{Poisson 2D problem in a distorted domain: parametric (left) and physical distorted (right) domains.
 In red, the re-parameterized trimmed elements $\T^{\Gamma}_\Omega$, in blue, the non-trimmed ones $\T^{int}_\Omega$.}
 \label{fig:poisson_2D_distorted_mesh}
\end{figure}
The aim of this section is to show the impact of this geometry approximation on the accuracy of the numerical method.

In Figure \ref{fig:poisson_2D_l2_h1_area_perimeter_distorted_occ} the solution error
in the $L^2$ and $H^1$ norms and the absolute error of the area
and external perimeter are shown for the distorted domain case. It can be seen that both the area and perimeter
errors, that have a noisy behavior due to the distortion of the parameterization,
are bounded from below by  the geometric precision.
On the other hand, the solution $L^2$ and $H^1$ errors (c.f.\ Figures
\ref{fig:poisson_2D_l2_h1_area_perimeter_distorted_l2_occ} and \ref{fig:poisson_2D_l2_h1_area_perimeter_distorted_h1_occ})
show optimal converges properties until they reach the geometric precision (which bounds them from below).
\begin{figure}
 \centering
 \subfigure[Solution error in the $L^2$ norm.\label{fig:poisson_2D_l2_h1_area_perimeter_distorted_l2_occ}]
 {\ifdraftversion\tikzsetnextfilename{figures_paper/poisson_2D_sin_sin_l2_distorted_occ}\input{poisson_2D_sin_sin/figures_paper/poisson_2D_sin_sin_l2_distorted_occ}\else\includegraphics{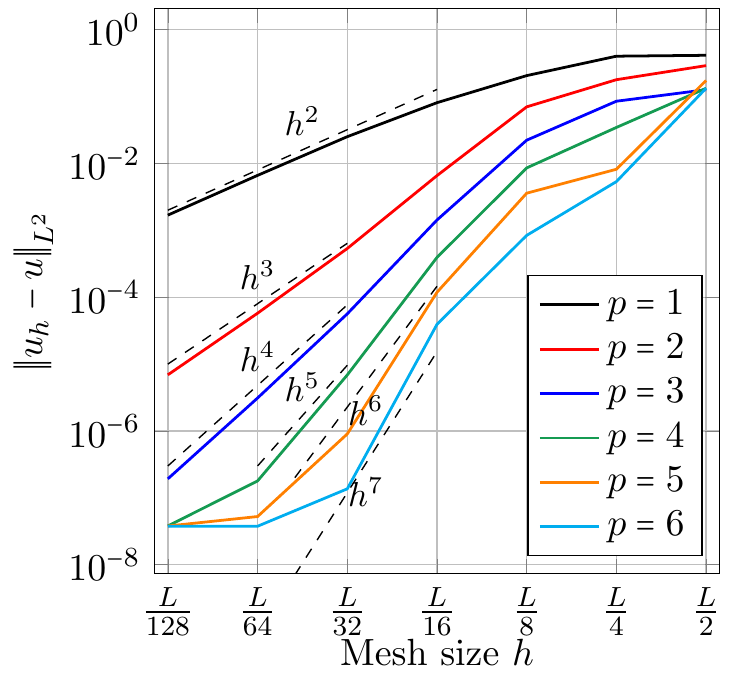}\fi}\hfill
 \subfigure[Solution error in the $H^1$ norm.\label{fig:poisson_2D_l2_h1_area_perimeter_distorted_h1_occ}]
 {\ifdraftversion\tikzsetnextfilename{figures_paper/poisson_2D_sin_sin_h1_distorted_occ}\input{poisson_2D_sin_sin/figures_paper/poisson_2D_sin_sin_h1_distorted_occ}\else\includegraphics{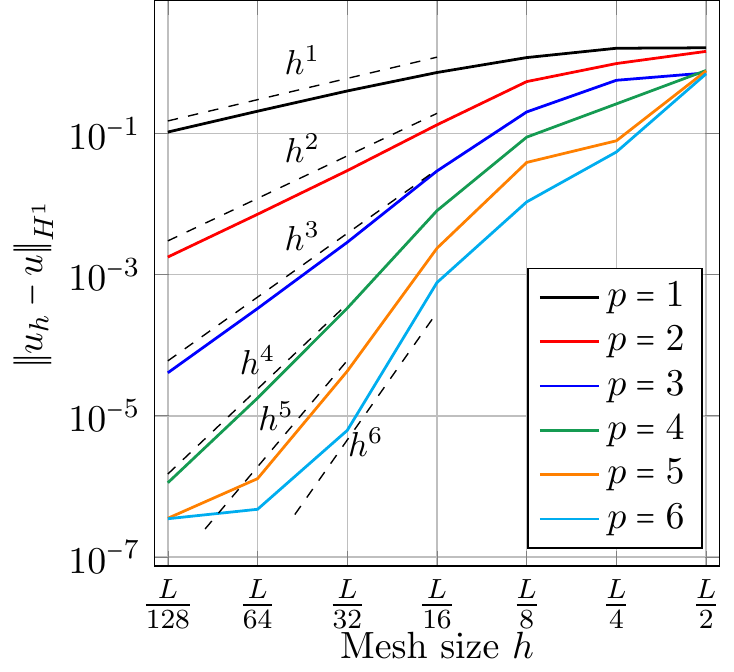}\fi}\\
 \subfigure[Area error.]
 {\ifdraftversion\tikzsetnextfilename{figures_paper/poisson_2D_sin_sin_area_distorted_occ}\input{poisson_2D_sin_sin/figures_paper/poisson_2D_sin_sin_area_distorted_occ}\else\includegraphics{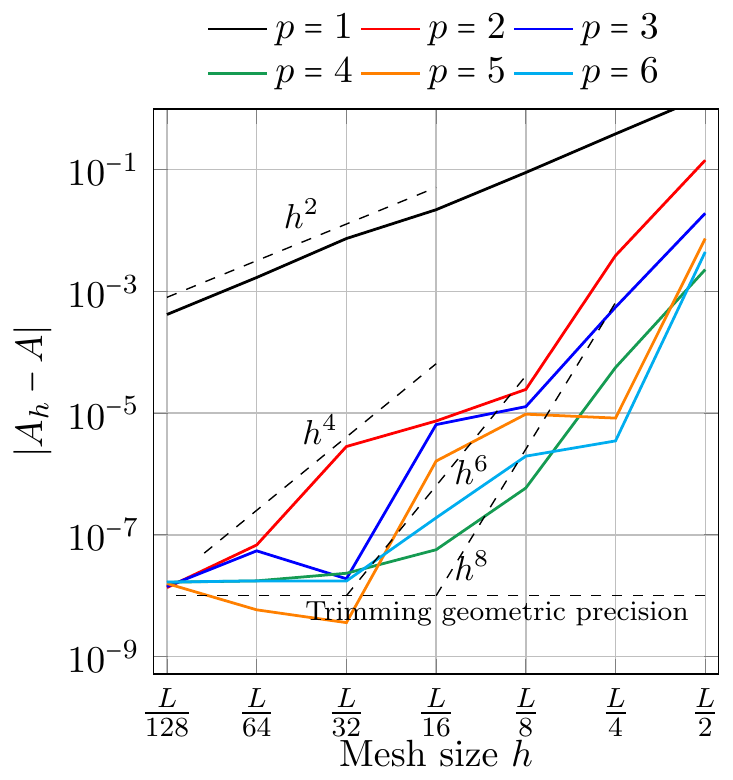}\fi}\hfill
 \subfigure[Perimeter error.]
 {\ifdraftversion\tikzsetnextfilename{figures_paper/poisson_2D_sin_sin_perimeter_distorted_occ}\input{poisson_2D_sin_sin/figures_paper/poisson_2D_sin_sin_perimeter_distorted_occ}\else\includegraphics{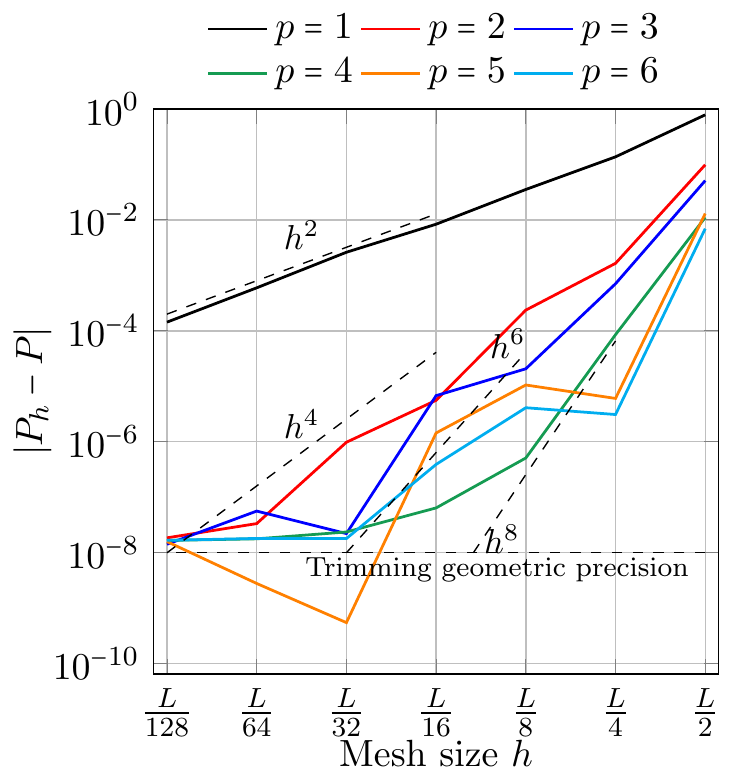}\fi}
 \caption{Poisson 2D problem in a distorted domain with a geometric precision $10^{-8}$:
 error of the solution $u_h$ in the $L^2$ and $H^1$ norms, and absolute error of area and perimeter computations with the re-parameterized domain, for different
 degrees.}
 \label{fig:poisson_2D_l2_h1_area_perimeter_distorted_occ}
\end{figure}

In order to confirm the interpretation of our numerical results, 
for this specific (simple) case, we were able to compute an approximation of the curve $\partial\hat\Omega$ with a precision under $10^{-12}$, by enforcing manually the precision of the geometric algorithms.
The obtained results are gathered in Figure \ref{fig:poisson_2D_l2_h1_area_perimeter_distorted}.
All the errors present the same behavior as in the previous case, but solution errors
are not longer bounded by the geometric precision.
\begin{figure}
 \centering
 \subfigure[Solution error in the $L^2$ norm.\label{fig:poisson_2D_l2_h1_area_perimeter_distorted_l2}]
 {\ifdraftversion\tikzsetnextfilename{figures_paper/poisson_2D_sin_sin_l2_distorted}\input{poisson_2D_sin_sin/figures_paper/poisson_2D_sin_sin_l2_distorted}\else\includegraphics{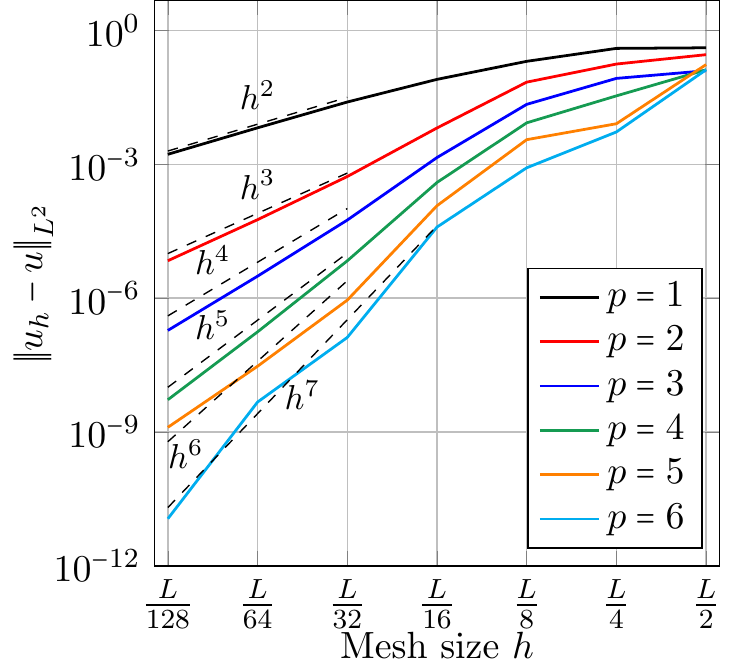}\fi}\hfill
 \subfigure[Solution error in the $H^1$ norm.\label{fig:poisson_2D_l2_h1_area_perimeter_distorted_h1}]
 {\ifdraftversion\tikzsetnextfilename{figures_paper/poisson_2D_sin_sin_h1_distorted}\input{poisson_2D_sin_sin/figures_paper/poisson_2D_sin_sin_h1_distorted}\else\includegraphics{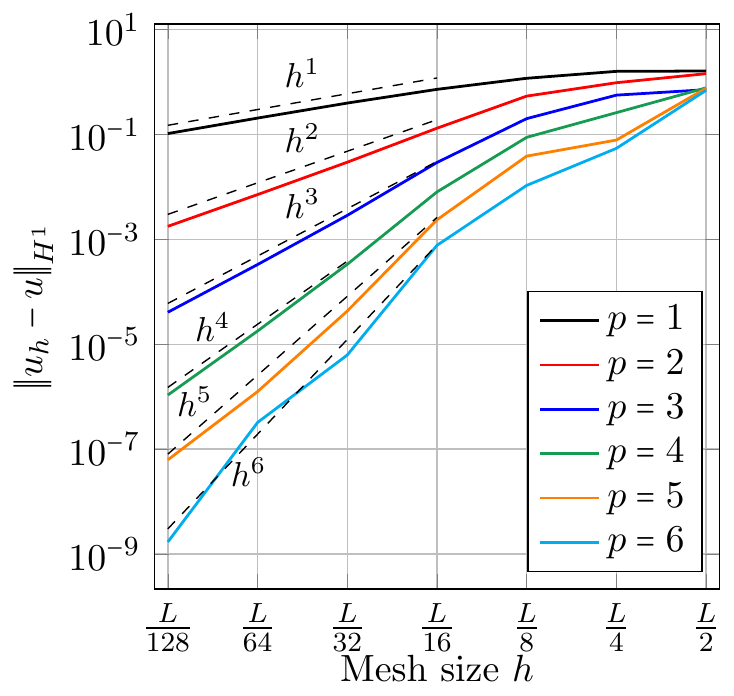}\fi}\\
 \subfigure[Area error.]
 {\ifdraftversion\tikzsetnextfilename{figures_paper/poisson_2D_sin_sin_area_distorted}\input{poisson_2D_sin_sin/figures_paper/poisson_2D_sin_sin_area_distorted}\else\includegraphics{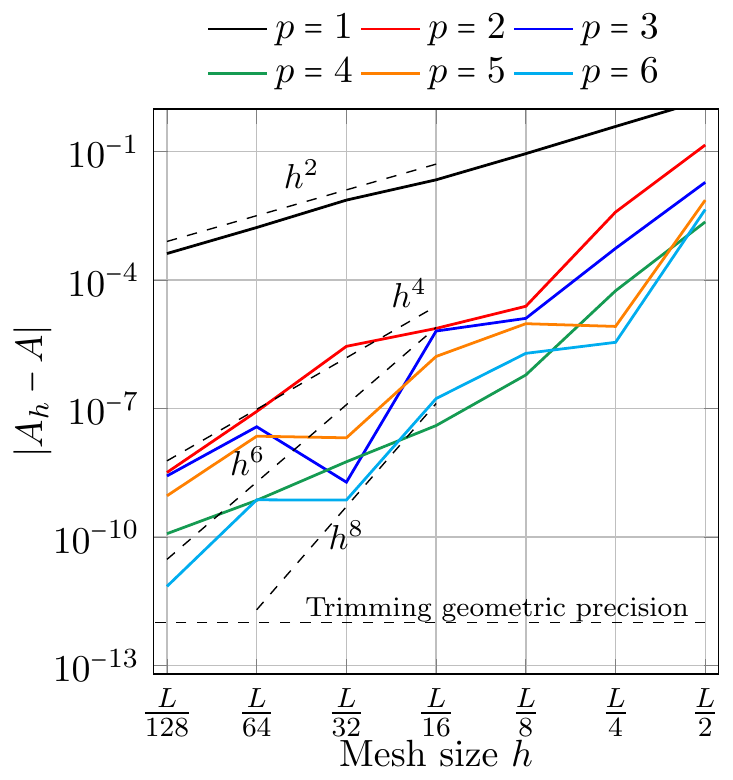}\fi}\hfill
 \subfigure[Perimeter error.]
 {\ifdraftversion\tikzsetnextfilename{figures_paper/poisson_2D_sin_sin_perimeter_distorted}\input{poisson_2D_sin_sin/figures_paper/poisson_2D_sin_sin_perimeter_distorted}\else\includegraphics{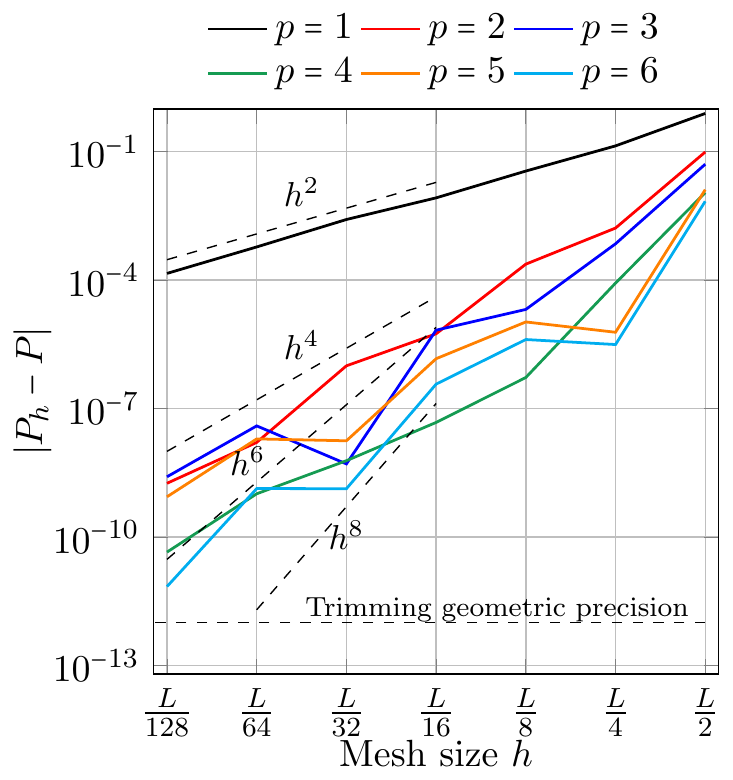}\fi}
 \caption{Poisson 2D problem in a distorted domain with a geometric precision $10^{-12}$:
  error of the solution $u_h$ in the $L^2$ and $H^1$ norms, and absolute error of area and perimeter computations with the re-parameterized domain, for different
 degrees.}
 \label{fig:poisson_2D_l2_h1_area_perimeter_distorted}
\end{figure}

\clearpage
\subsection{Infinite plate with circular hole in a 2D domain} \label{sec:plate_with_hole}
In this section the infinite plate with a circular hole under tension is studied,
a well-known case in the isogeometric literature
(e.g.\ \cite{hughes2005,bazilevs2006}).

Considering plain strain conditions for the problem, the analytic solution, expressed in polar coordinates, is:
\begin{subequations}
\begin{align}
    u_x(r,\,\theta) &= \frac{T_x R^2}{4\mu r} 
    \left[
      \left(2-2\nu\right)
        \left(\frac{r^2}{R^2} + 2\right)\cos\theta
        +\left(1-\frac{R^2}{r^2}\right)\cos3\theta
    \right]\,,\\
    u_y(r,\,\theta) &= \frac{T_x R^2}{4\mu r}
    \left[
      \left(4-4\nu-2\nu\frac{r^2}{R^2}\right)\sin\theta
        +\left(1-\frac{R^2}{r^2}\right)\sin3\theta
    \right]\,.
\end{align}
\end{subequations}
The involved quantities and the problem boundary conditions are described in Figure \ref{fig:plate_sketch}. Dirichlet boundary conditions are on the non-trimmed part of the boundary and are imposed strongly. 
\begin{figure}[h]
 \centering
 \subfigure[Description of the problem.\label{fig:plate_sketch}]
 {\ifdraftversion\tikzsetnextfilename{figures_paper/plate_hole_sketch}\input{plate_with_hole/figures_paper/plate_hole_sketch}\else\includegraphics{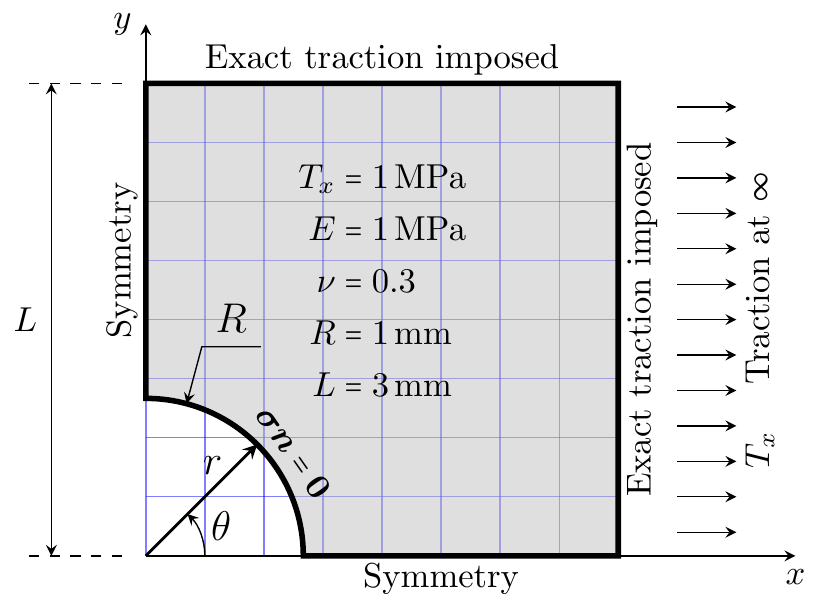}\fi}\hfill
 \subfigure[Physical domain $\Omega$. In red, re-parameterized trimmed elements, in blue, non-trimmed ones.\label{fig:poisson_2D_reparam}]
 {\ifdraftversion\tikzsetnextfilename{figures_paper/plate_hole_mesh_sketch}\input{plate_with_hole/figures_paper/plate_hole_mesh_sketch}\else\includegraphics{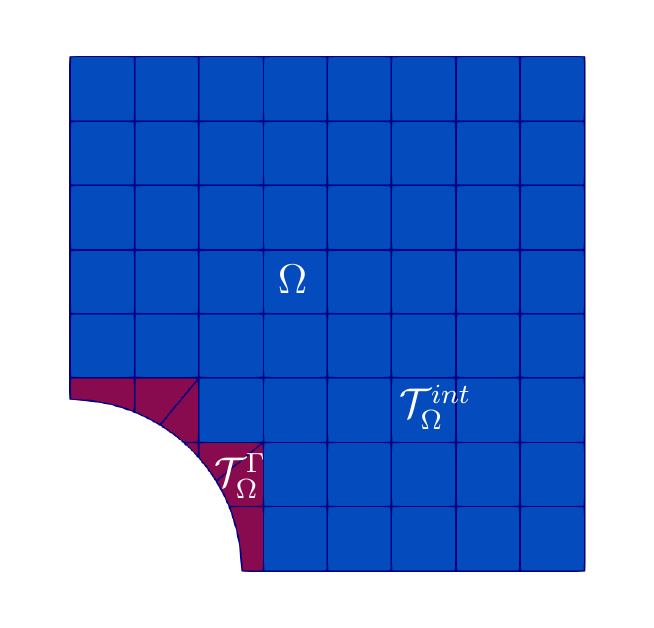}\fi}
 \caption{Infinite plate with hole problem: definition and trimmed computational domain.}
 \label{fig:plate_problem}
\end{figure}
We refer the reader to \cite{Timoshenko1951} for further details.
The computational domain is built as the boolean difference of a $L$-length square domain 
minus a circle with radius $R$.

In the Figure \ref{fig:plate_with_hole_l2_h1_area_perimeter} the solution errors in the
$H^1$ norm for different mesh sizes and degrees are shown,
together with the absolute errors of the  area 
computation.
In these results, the trimmed elements are
re-parameterized with the same degree of the solution discretization and using
the coarsest possible re-parameterization (see Figure \ref{fig:poisson_2D_reparam} as a matter of example). Thanks to the underlying Cartesian mesh, the geometric precision is down to $10^{-15}$.
The solution errors converge optimally for all the degrees,
and it is also the case for the area error, that presents the same odd/even
behavior described in previous section. 
\begin{figure}
 \centering
 \subfigure[Solution error in the $H^1$ norm.]
 {\ifdraftversion\tikzsetnextfilename{figures_paper/plate_hole_h1}\input{plate_with_hole/figures_paper/plate_hole_h1}\else\includegraphics{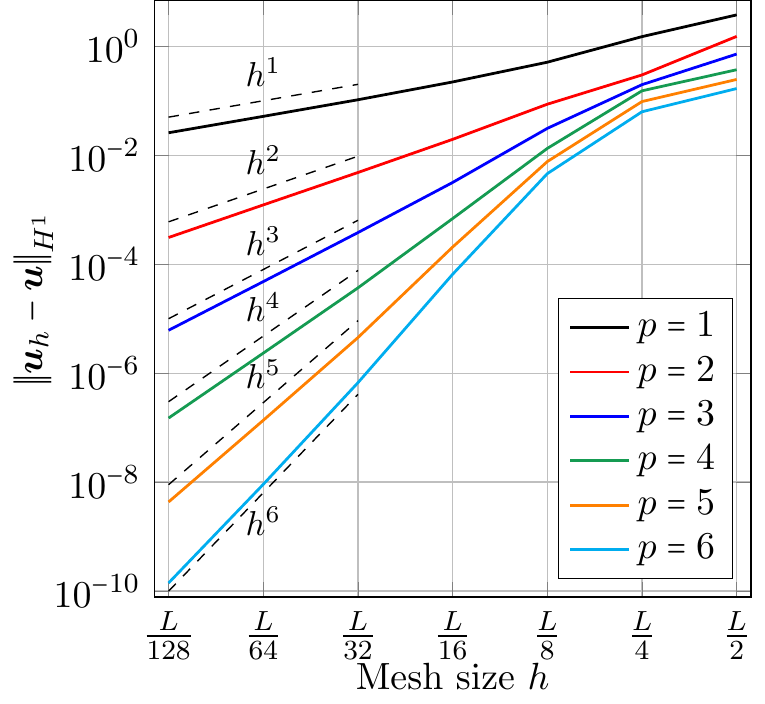}\fi}\hfill
 \subfigure[Area error.]
 {\ifdraftversion\tikzsetnextfilename{figures_paper/plate_hole_area}\input{plate_with_hole/figures_paper/plate_hole_area}\else\includegraphics{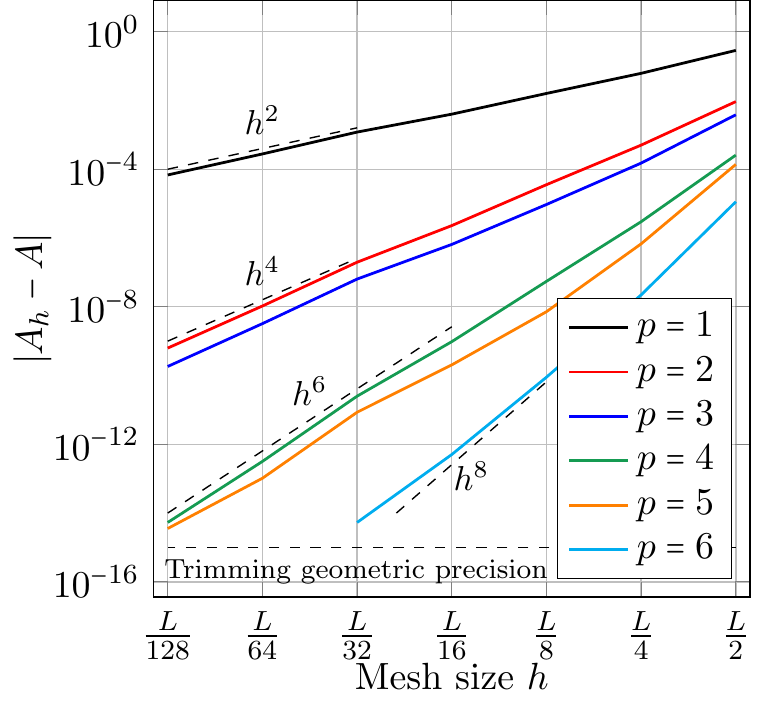}\fi}
 \caption{Infinite plate with hole problem: error of the solution $u_h$ in $H^1$ norm, and absolute error of area computation, for different
 degrees.}
 \label{fig:plate_with_hole_l2_h1_area_perimeter}
\end{figure}

Figure \ref{fig:plate_with_hole_conditioning} reports the condition number of the stiffness matrices
for the preconditioned (using the diagonal scaling preconditioner presented in Section \ref{sec:conditioning})
and non-preconditioned cases. As it was already seen in the Poisson example presented in Section~ \ref{sec:poisson_2D},
the diagonal scaling preconditioner keeps the matrix conditioning under control for all considered degrees.
\begin{figure}[b!]
 \subfigure[Without preconditioning.]
 {\ifdraftversion\tikzsetnextfilename{figures_paper/plate_hole_conditioning}\input{plate_with_hole/figures_paper/plate_hole_conditioning}\else\includegraphics{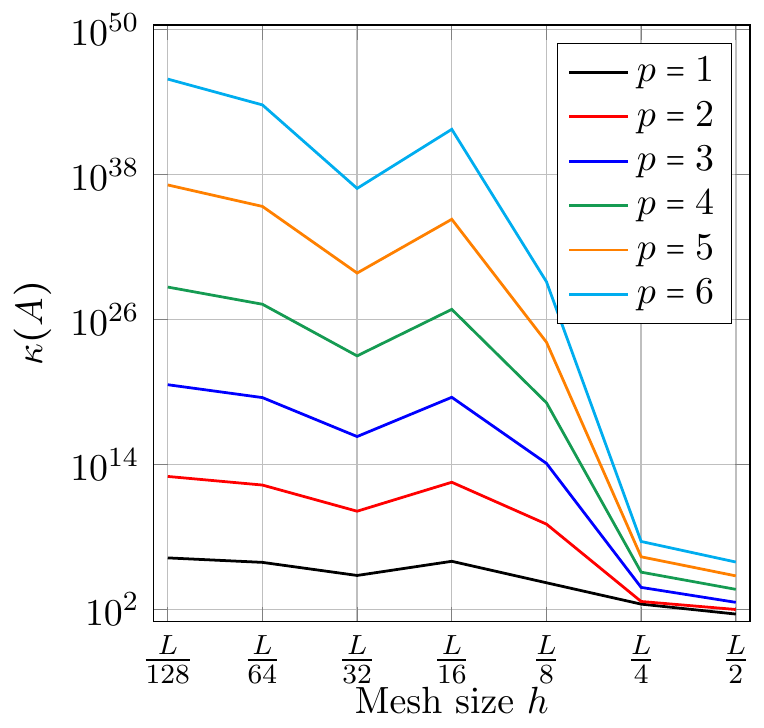}\fi}\hfill
 \subfigure[With diagonal scaling preconditioning.]
 {\ifdraftversion\tikzsetnextfilename{figures_paper/plate_hole_conditioning_scaled}\input{plate_with_hole/figures_paper/plate_hole_conditioning_scaled}\else\includegraphics{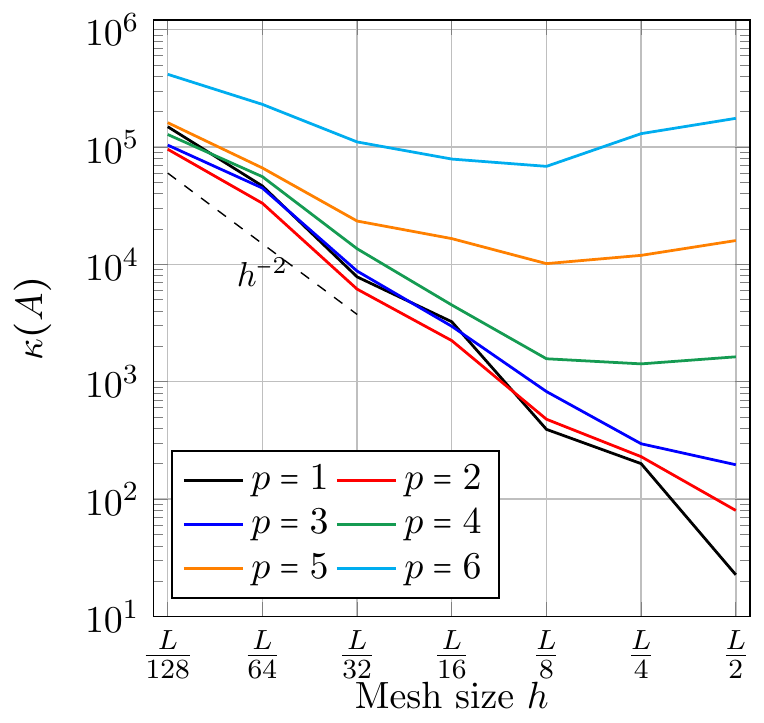}\fi}
 \caption{Infinite plate with hole problem: stiffness matrix conditioning with and without diagonal scaling preconditioning.}
 \label{fig:plate_with_hole_conditioning}
\end{figure}

\subsection{Poisson equation in a 3D domain} \label{sec:poisson_3D}
A Poisson problem in the interior of a 3D sphere is studied in this section:
\begin{subequations}
\begin{alignat}{2}
\Delta u &= f\quad&&\text{in }\Omega\,,\\
\nabla u\cdot\bm{n} &= g_n\quad&&\text{on }\partial\Omega\,.
\end{alignat}
\end{subequations}
The problem analytic solution, in spherical coordinates, is:
\begin{align}
  u(r,\,\theta,\,\varphi) &= r^2\,\sin^2\left(\frac{\pi}{R}\,r\right)\,\cos\theta\,\sin^2\varphi\,,
\end{align}
where $\theta$ is the azimuth and $\varphi$ the co-latitude.
The laplacian of $u$ is
\begin{align}
\begin{split}
  \Delta u(r,\,\theta,\,\varphi) &= 
	\left[\left(6-\frac{1}{\sin^2\varphi}\right)\,\sin^2\left(\frac{\pi}{R}\,r\right) + \frac{6\pi}{R}\,r\,\sin\left(\frac{2\pi}{R}\,r\right)
+2\left(\frac{\pi}{R}\right)^2 r^2\,\cos\left(\frac{2\pi}{R}\,r\right)\right]\\
&\cos\theta\,\sin^2\varphi + \left(2\,\cos2\varphi + \frac{\sin2\varphi}{\tan\varphi}\right)\,\sin^2\left(\frac{\pi}{R}\,r\right)\,\cos\theta\,,
\end{split}
\end{align}
where the Neumann condition on the sphere boundary is $\nabla u\cdot\bm{n} = g_n = 0$.
As in Section \ref{sec:poisson_2D},  we impose weakly the condition
\begin{align}
\int_\Omega u = \int_\Omega r^2\,\sin^2\left(\frac{\pi}{R}\,r\right)\,\cos\theta\,\sin^2\varphi=0\,.
\end{align}

The computational domain is built as the boolean intersection of a unit cube domain
and a sphere with radius $R=1$.

As it can be seen in Figure \ref{fig:poisson_3D_l2_h1_volume}, optimal approximation
properties are achieved both in
$L^2$ and $H^1$ norms for different mesh sizes and degrees.
The computation of the domain volume is also optimal and
presents the same odd/even behavior observed before.
\begin{figure}
 \subfigure[Solution error in the $L^2$ norm.]
 {\ifdraftversion\tikzsetnextfilename{figures_paper/poisson_3D_l2}\input{poisson_3D/figures_paper/poisson_3D_l2}\else\includegraphics{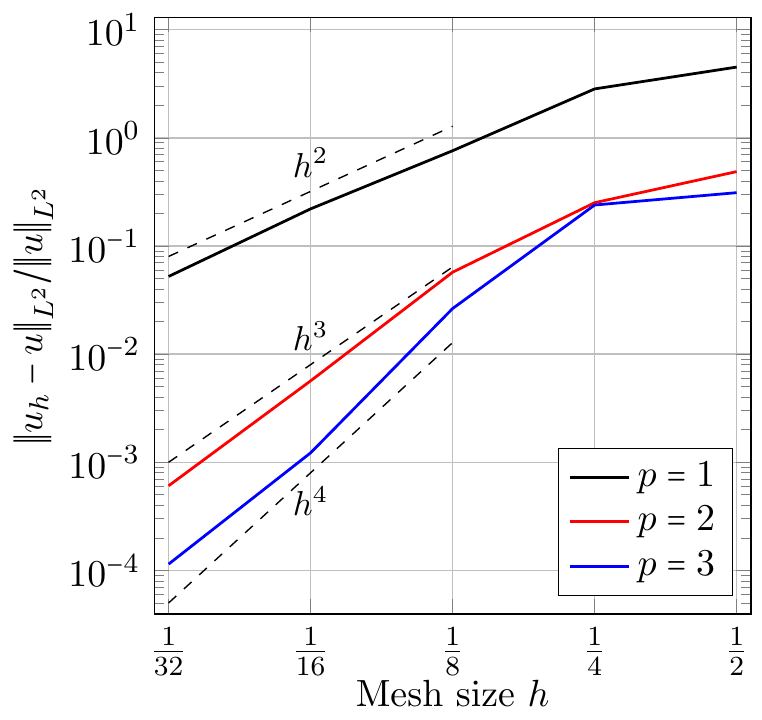}\fi}\hfill
 \subfigure[Solution error in the $H^1$ norm.]
 {\ifdraftversion\tikzsetnextfilename{figures_paper/poisson_3D_h1}\input{poisson_3D/figures_paper/poisson_3D_h1}\else\includegraphics{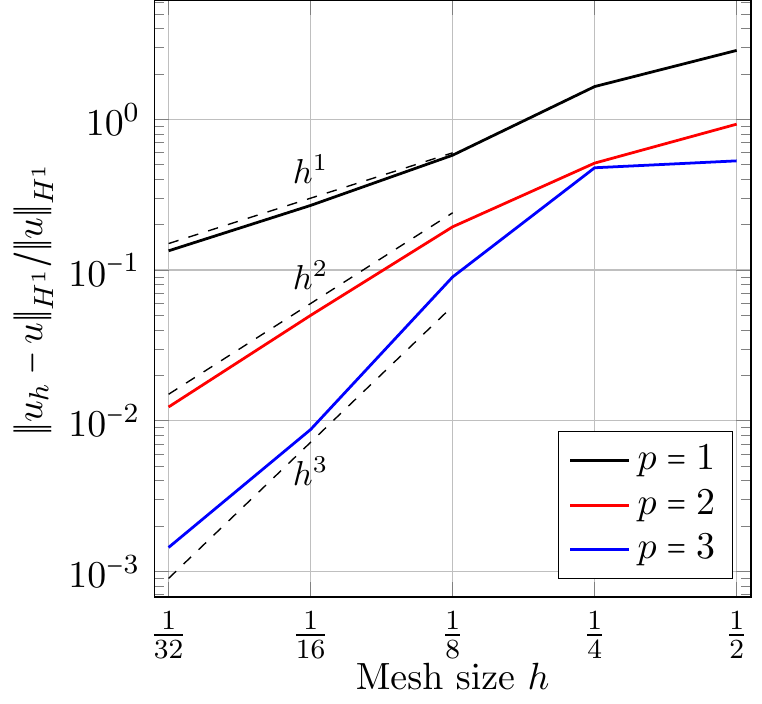}\fi}\\
 \centering
 \subfigure[Volume error.]
 {\ifdraftversion\tikzsetnextfilename{figures_paper/poisson_3D_volume}\input{poisson_3D/figures_paper/poisson_3D_volume}\else\includegraphics{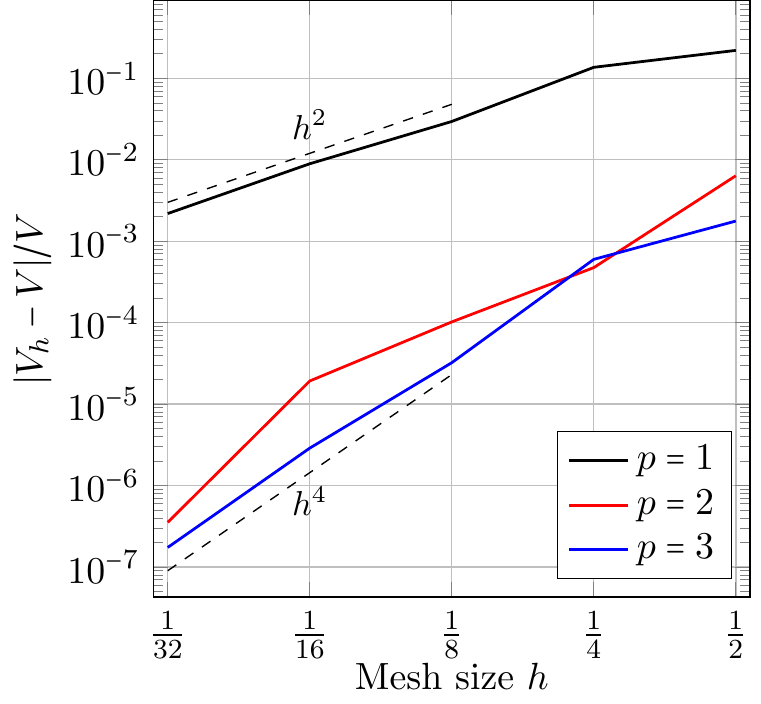}\fi}
 \caption{Poisson 3D problem: error of the solution $u_h$ in the $L^2$ and $H^1$ norms and volume error, for different
 degrees.} 
 \label{fig:poisson_3D_l2_h1_volume}
\end{figure}

Finally, and as in previous numerical experiments (Sections \ref{sec:plate_with_hole} and \ref{sec:poisson_2D}),
the matrix conditioning, with and without preconditioning, is analyzed in Figure \ref{fig:poisson_3D_conditioning}.
As before, the diagonal scaling preconditioner reduced drastically the matrix's condition number,
allowing to recover the expected asymptotic behavior  $h^{-2}$.
\begin{figure}
 \subfigure[Without preconditioning.]
 {\ifdraftversion\tikzsetnextfilename{figures_paper/poisson_3D_conditioning}\input{poisson_3D/figures_paper/poisson_3D_conditioning}\else\includegraphics{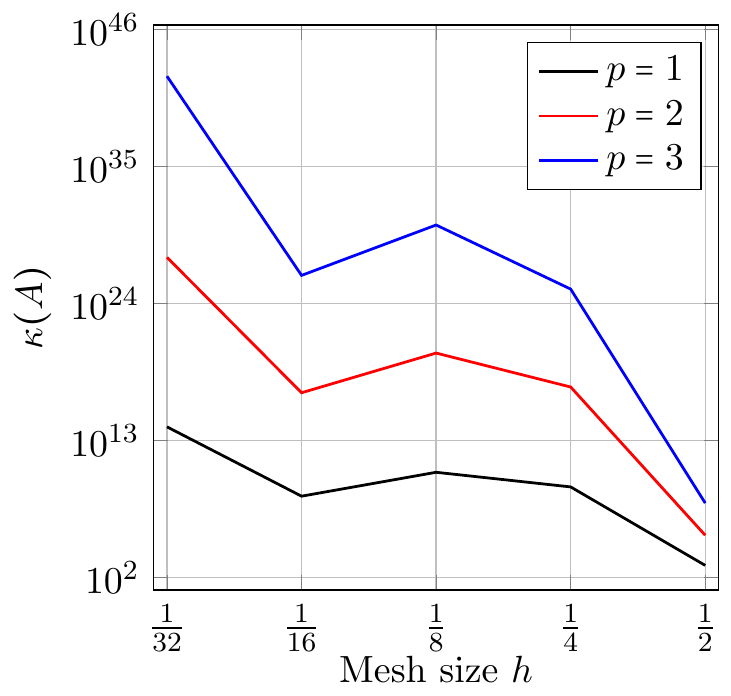}\fi}\hfill
 \subfigure[With diagonal scaling preconditioning.]
 {\ifdraftversion\tikzsetnextfilename{figures_paper/poisson_3D_conditioning_scaled}\input{poisson_3D/figures_paper/poisson_3D_conditioning_scaled}\else\includegraphics{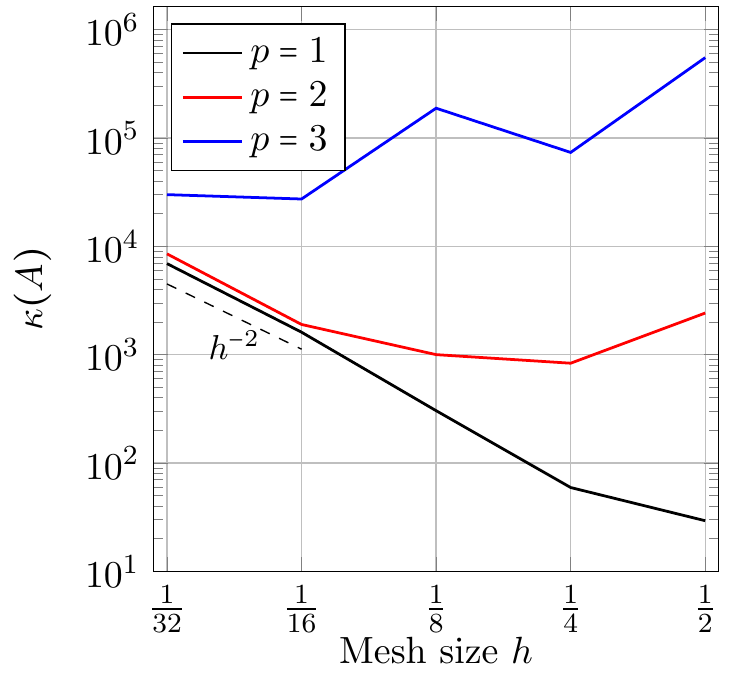}\fi}
 \caption{Poisson 3D problem: stiffness matrix conditioning with and without diagonal scaling preconditioning.}
 \label{fig:poisson_3D_conditioning}
\end{figure}

\clearpage
\subsection{A 3D mold with a cooling channel}
Finally, in order to illustrate the capabilities of the presented method for general geometries
in this last numerical example we study the elastic response of a 3D printed mold.
The mold presents an interior cooling channel whose mission is to reduce its temperature by means of a circulating cold fluid.

As it can be seen in Figure \ref{fig:mold_geom}, the cooling channel follows a complex path in the interior of the mold in order
to successfully refrigerate it: in enters from the interior cylindrical face,
travels throw the interior and exterior faces of the mold, and exists from the exterior cylindrical face.
In addition, some small cylindrical holes are also present in the top face of the mold (2 in the interior wall and 4 in the exterior one).
\begin{figure}[h]
 \includegraphics[width=0.49\textwidth]{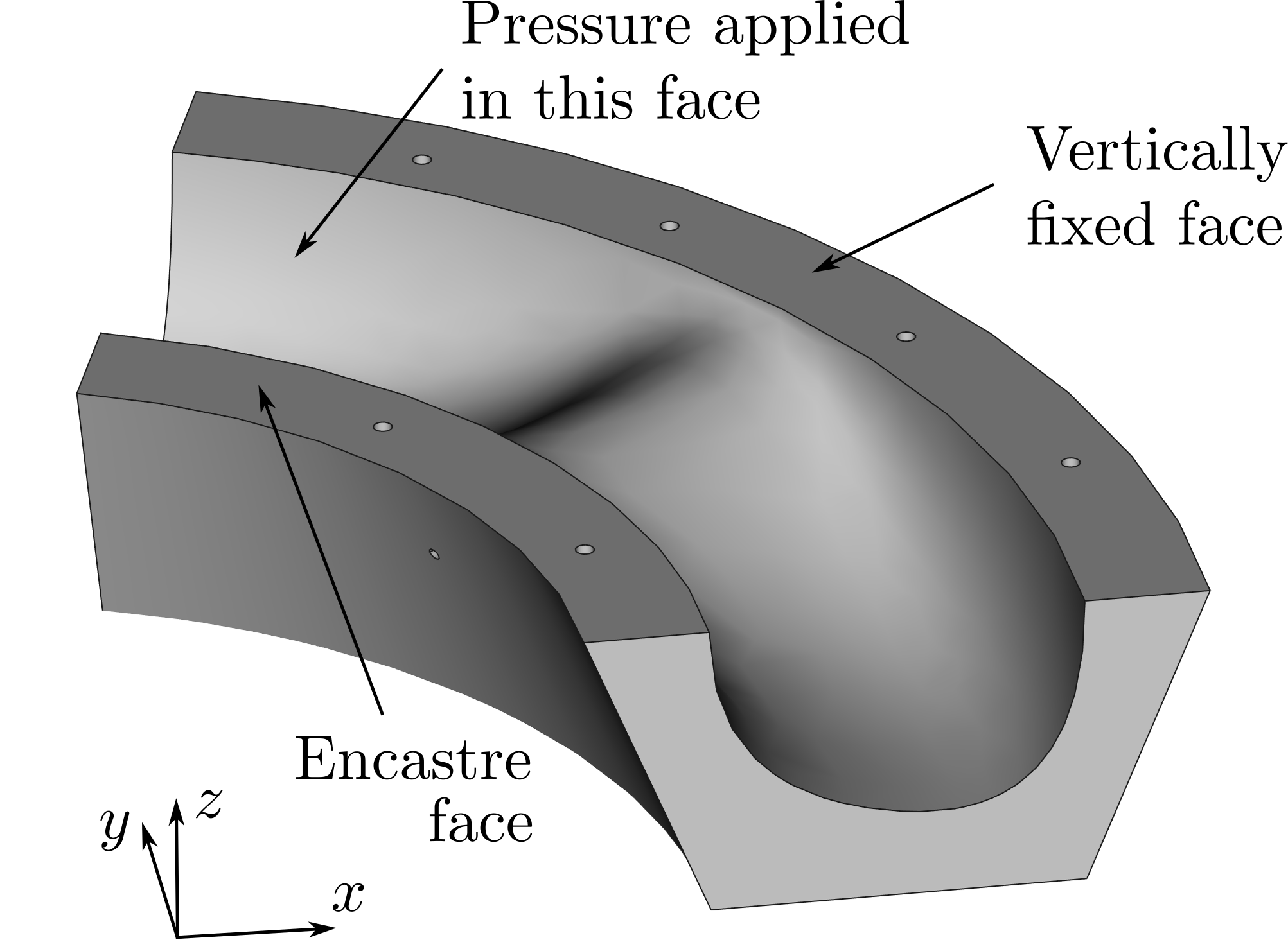}\hfill
 \includegraphics[width=0.49\textwidth]{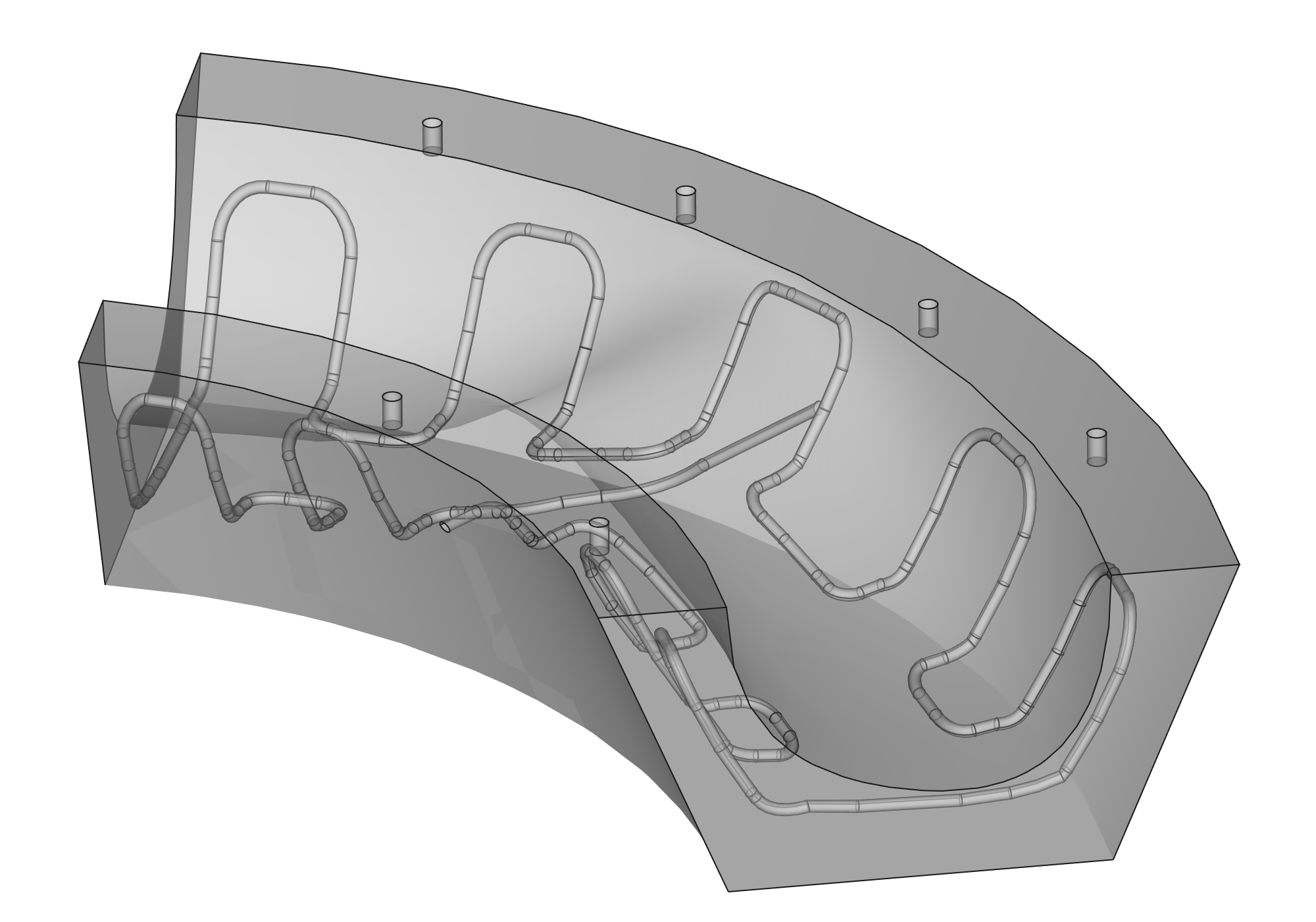}
 \caption{Mold with an interior cooling channel. The mold is a single trivariate and the cooling channel is defined as a boolean operation (subtraction).}
 \label{fig:mold_geom}
\end{figure}

The mold geometry is described with a single spline trivariate whereas
the cooling channel has been created as the extrusion of a circle along the channel path.
The final geometry is built by means of a boolean operation: the mold minus the cooling tube.
In Figure \ref{fig:mold_trim_elements} the non-trimmed elements (in green) together with the trimmed ones (in blue) are shown.
\begin{figure}
\subfigure[Trimmed (blue) and non-trimmed (green) elements of the trivariate after the trimming operation.]{
 \includegraphics[width=0.49\textwidth]{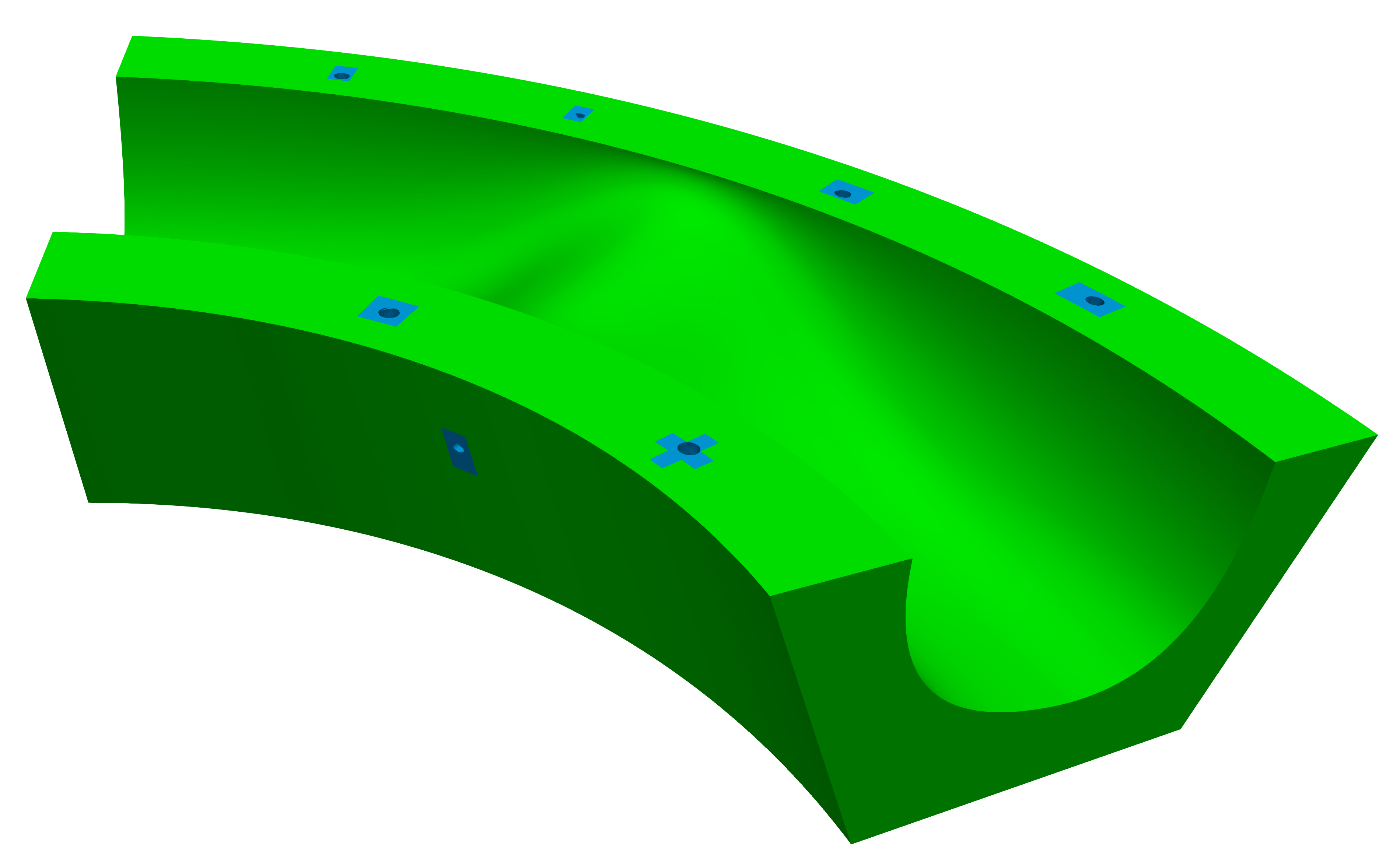}\hfill
 \includegraphics[width=0.49\textwidth]{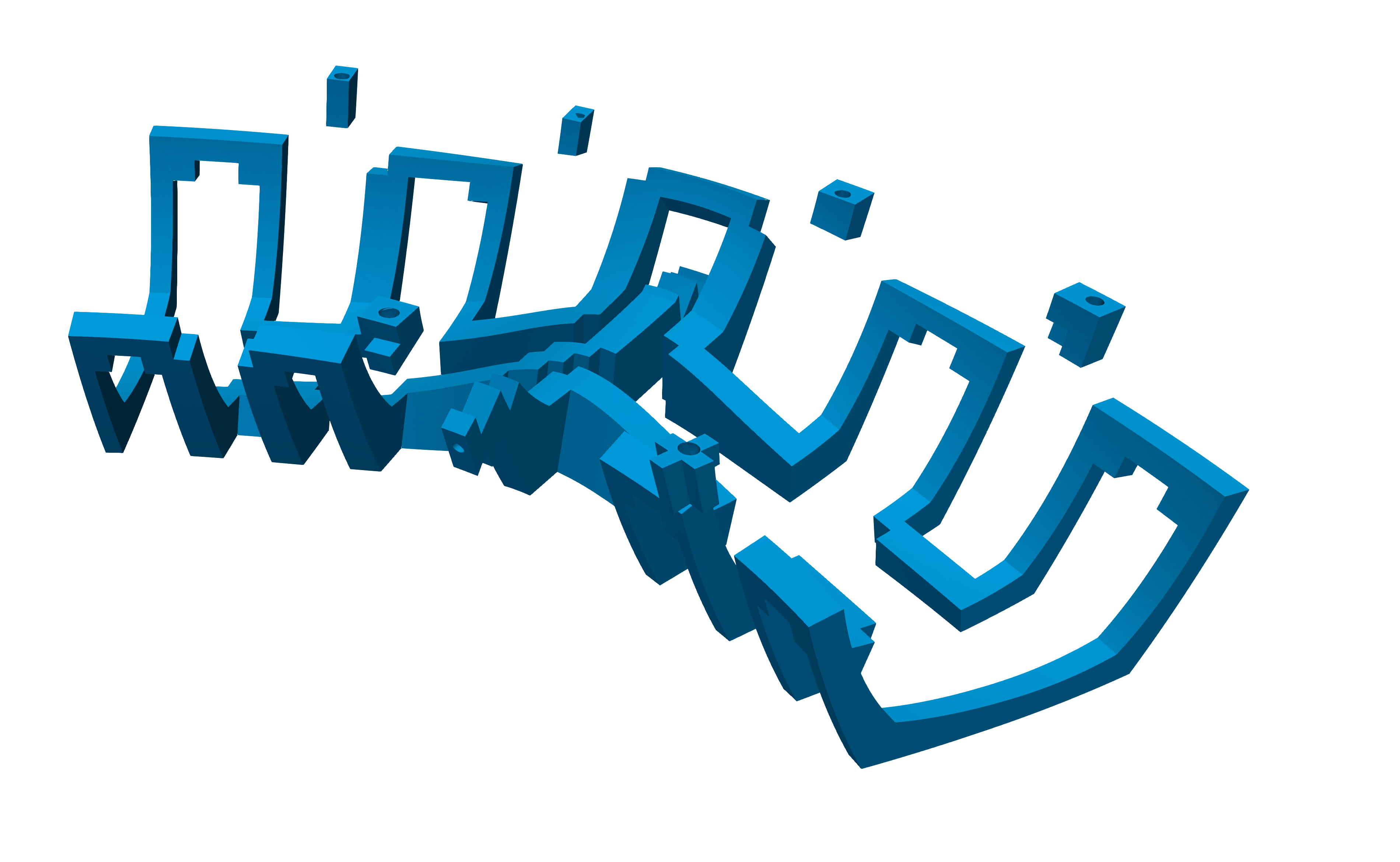}}\newline
\subfigure[Re-parameterization of some trimmed B\'ezier elements with cubic tetrahedra. Every color is associated to a different element.\label{fig:mold_reparam}]{
 \includegraphics[width=0.49\textwidth]{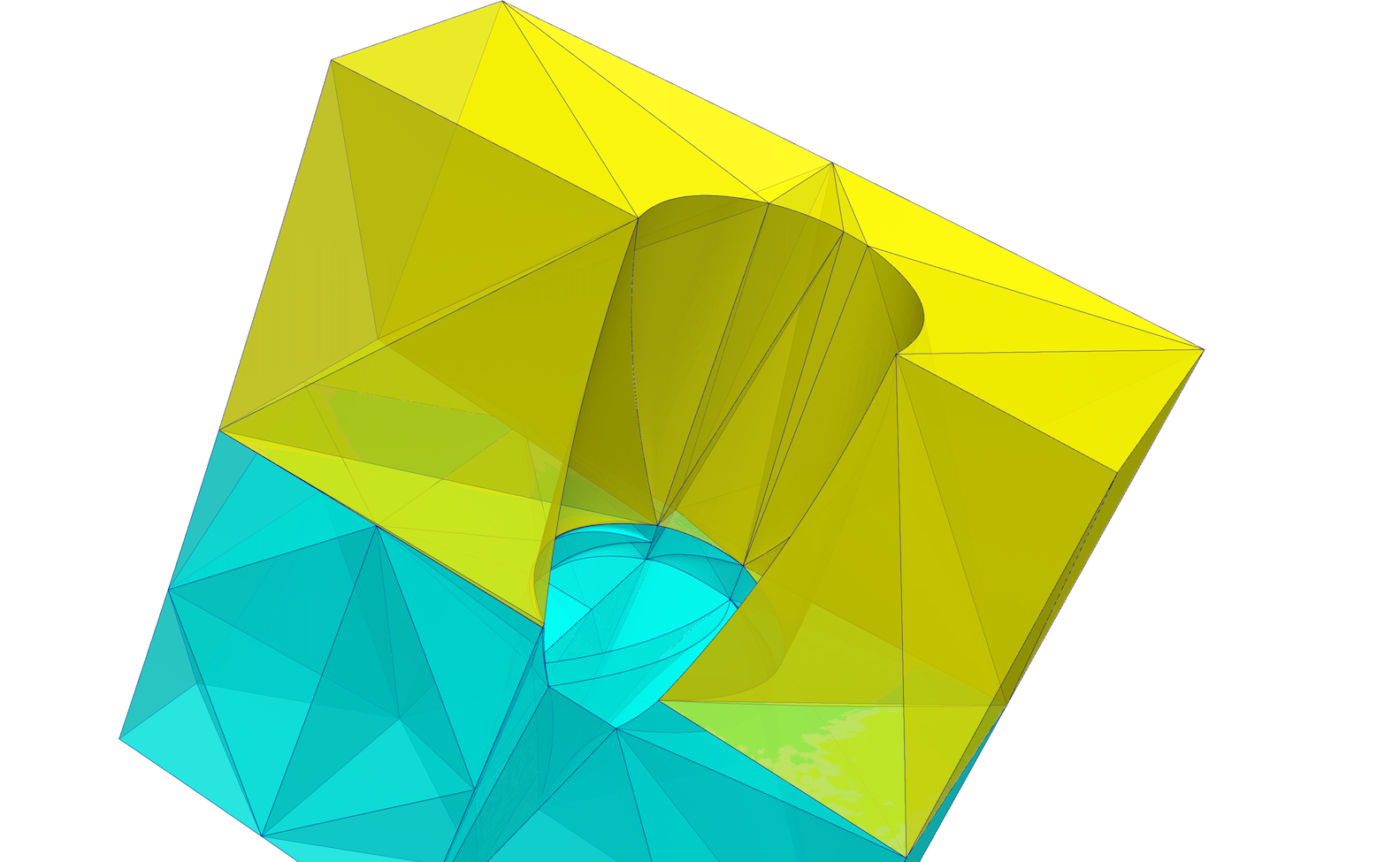}\hfill
 \includegraphics[width=0.49\textwidth]{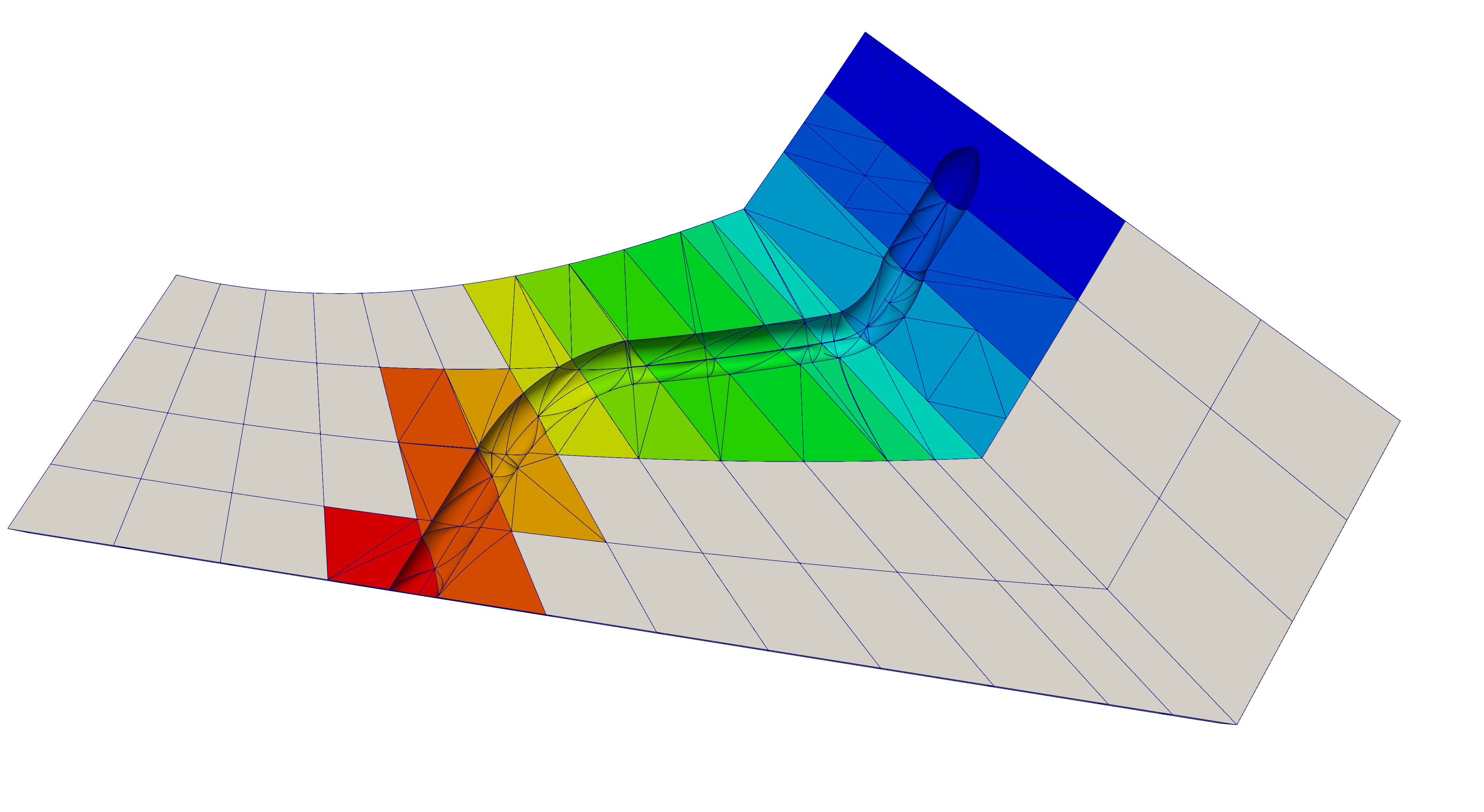}}
 \caption{Re-parameterization of the trimmed B\'ezier elements in the domain.}
 \label{fig:mold_trim_elements}
\end{figure}
The trimmed B\'ezier elements are re-parameterized with cubic tetrahedra.
As a matter of example, some re-parameterized elements are shown in Figure \ref{fig:mold_reparam}.

The trivariate has degree 3 along the longitudinal direction and degree 2 in the other two directions ($56\times6\times36$ elements).
The mechanical properties of the material mold are $E=10\,\text{GPa}$ and $\nu=0.2$.
These values are similar to the material properties of a metal specimen manufactured with an additive manufacturing procedure: they were inferred from \cite[Table 3]{Gu2013}.

The performed simulation consists in a linear elasticity analysis in which a traction force (Neumann condition) is applied on the curved face of the mold.
The internal top face is fully fixed encastre while the exterior top face is fixed just vertically.
The value of the traction is $1.0\,\text{MPa}$ along $z$-direction (pointing downwards) and $0.5\,\text{MPa}$ along $x$-direction.

The deformed mold together with the stress magnitude can be seen in Figure \ref{fig:mold_stress}.
The discontinuity that can be appreciated in the stress field near the corners is due to the fact that geometry parameterization is only $C^0$ continuous at those points.
Additionally, a view of the stress in the interior of the mold is shown in Figure \ref{fig:mold_stress_details}. As it can be seen, despite the fact that the mesh size is similar to the channel diameter, the stress distribution is affected by the presence of the channel.
\begin{figure}
 \centering
 \subfigure[Exterior stress distribution.\label{fig:mold_stress}]
 {\includegraphics[width=0.8\textwidth]{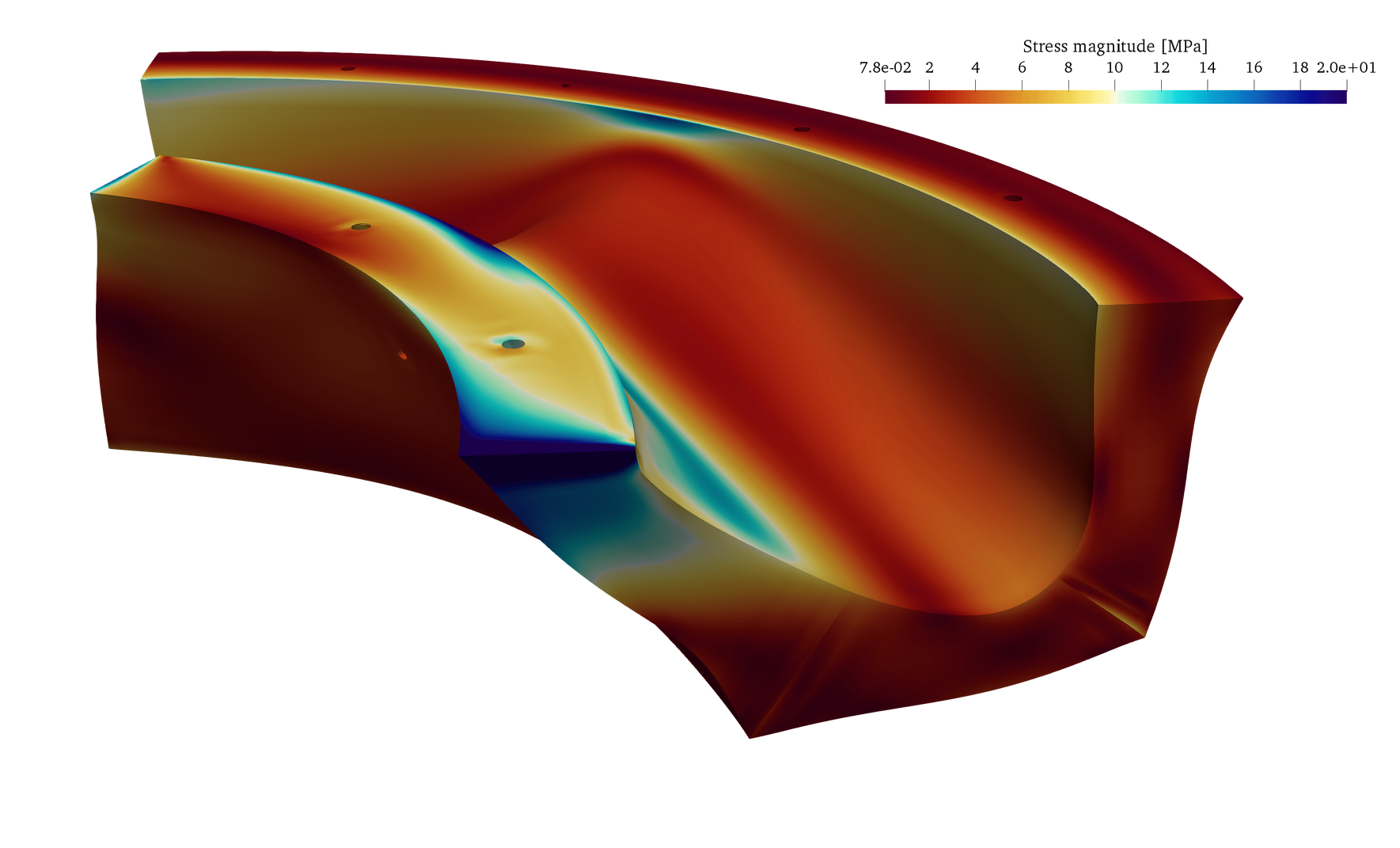}}\newline
 \subfigure[Interior stress along some a longitudinal section. Both pictures correspond to the front and back views of a one element thick slice.\label{fig:mold_stress_details}]
 {\includegraphics[width=0.8\textwidth] {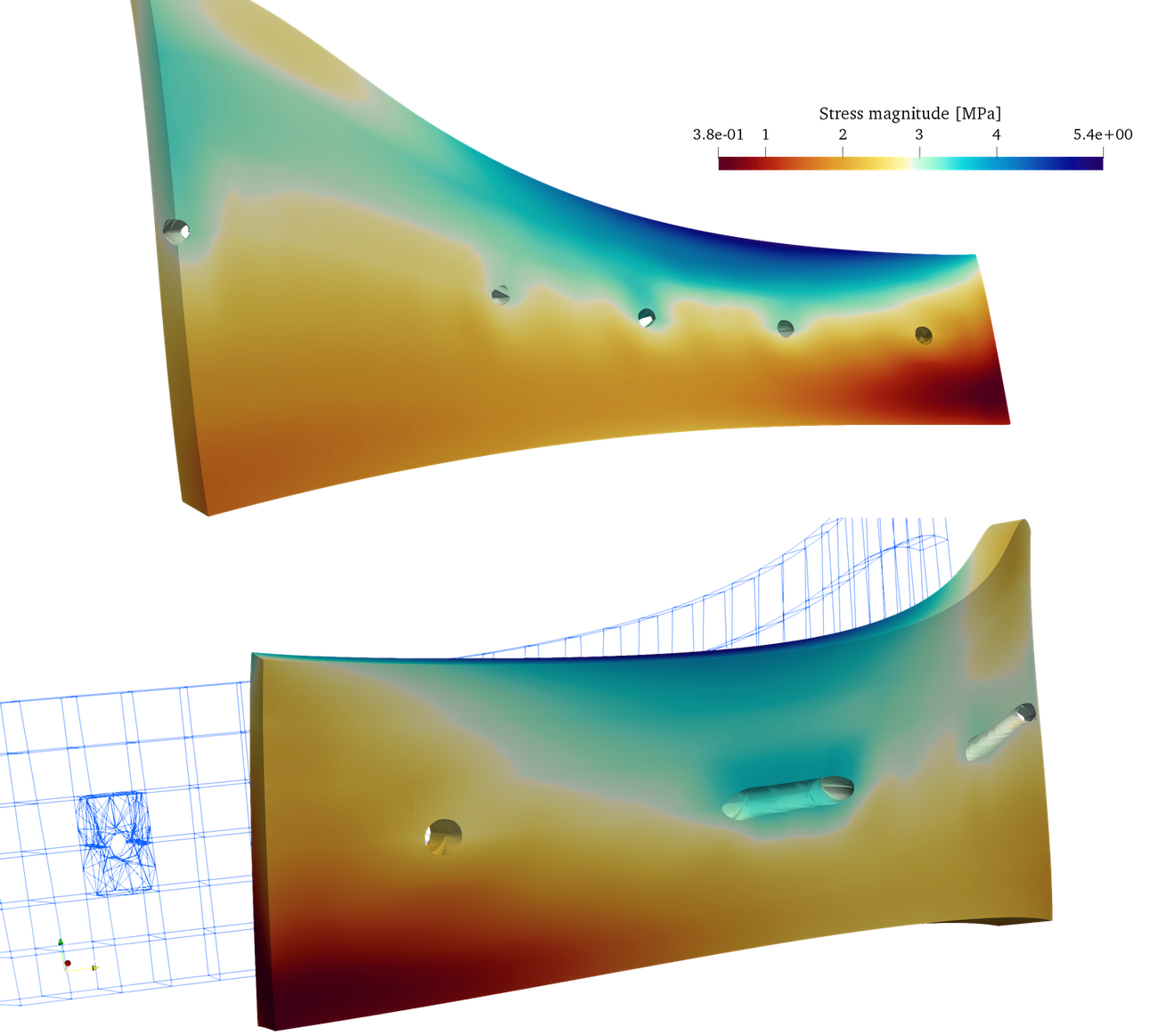}}
 \caption{Stress magnitude distribution of the mold plotted on the deformed configuration of the geometry.}
\end{figure}

\section{Conclusions} \label{sec:conclusions}

We have presented a novel approach for the construction of numerical methods for elliptic PDEs on trimmed geometries, seen as a special class of more general V-reps.
\review{More specifically, this method is applicable to the case of trimmed non-conforming multipatch trivariate volumes.

Our approach is based on the local re-parameterization of B\'ezier elements that are cut by the trimming, in contrast to the creation of a global re-parameterization of the full domain.
Thus, by performing a re-parameterization of the trimmed elements with the same degree as the discrete solution, we proved that our method guarantees optimal convergence rates when possibly non-homogeneous natural (Neumann) boundary conditions are imposed on the part of the boundary affected by trimming.

This theoretical result is supported by numerical experiments that illustrate how the use of low-order re-parameterizations lead inevitably to a lack of optimal convergence.
The use of finer, but still low-order element re-parameterizations, mitigates this problem for coarse solution mesh sizes $h$. But, for sufficiently small values of $h$, fine low-order element re-parameterizations yield a sub-optimal behavior.

Our numerical experiments also illustrated how the limited precision values inherent to the available geometric modeling tools limit the potential accuracy achievable by our numerical methods.

In all the considered experiments the use of a diagonal scaling preconditioner proved itself efficient in controlling the ill-conditioning derived from the trimming of the basis functions' support.

Regarding the re-parameterization of the trimmed elements, in this work we proposed a novel methodology for building adapted quadrature schemes based on the creation of coarse high-order tetrahedral and / or hexahedral finite element meshes. The only requirement to these meshes is the absence of sign change in tetrahedra / hexahedra Jacobians.
The potential of this methodology was in the local re-parameterization of a complex V-rep.
}

Finally, we would like to remark that the imposition of essential (Dirichlet) boundary conditions on trimmed boundaries ask for stabilization: in this case our approach should be combined with the stabilization proposed and analyzed in \cite{buffa_minimal_2019} and this is object of further studies. 
\review{The extension of the presented method to the case of domains constructed as the union of V-reps will be addressed in the future.}


\section*{Acknowledgements} 
Pablo Antolin and Annalisa Buffa gratefully acknowledge the support of the European Research Council, through the ERC AdG  n. 694515 - CHANGE. 
Massimiliano Martinelli has been supported by the European Union's Horizon 2020 research and innovation programme under grant agreement n.~680448 - CAxMan.
The authors also acknowledge R.~V\'azquez for his helpful insights and suggestions, and would like to thank G.~Elber and F.~Massarwi for the many  inspiring discussions on volumetric modeling. 

\bibliographystyle{plain}
\bibliography{biblio}

\end{document}